\newtheorem{theorem}{Theorem}
\newtheorem{corollary}{Corollary}
\newtheorem{proposition}{Proposition}
\numberwithin{equation}{section}
\DeclareMathOperator{\supp}{supp}
\DeclareMathOperator{\Dim}{Dim}
\begin{document}
\title[An other upper bounding for packing dimension ]{Upper bounding for
packing dimension in vectorial multifractal formalism}
\author{L. Ben Youssef}
\address{Current address : L. Ben Youssef. ISCAE. University of Manouba.
Tunisia.}
\email{Leila.BenYoussef@iscae.rnu.tn}
\date{January, 2011}
\keywords{Vectorial Multifractal Formalism, Multifractal Formalism, Packing,
Dimension.}
\thanks{This paper is in final form and no version of it will be submitted
for publication elsewhere.}

\begin{abstract}
We establish an other upper bounding for packing dimension in the framework
of the vectorial multifractal formalism that is in some cases finer than
that established by J. Peyrière.
\end{abstract}

\maketitle

\section{Introduction}

The multifractal analysis was developed around 1980, following the work of
B. Mandelbrot \cite{Man0, Man}, when he studied the multiplicative cascades
for energy dissipation in a context of turbulence. In 1992, G. Brown, G.
Michon and J. Peyrière \cite{G.B} have established the first general and
rigorous theorems of the multifractal formalism. Their work prompted the
three past decades, several mathematicians \cite{J.P1, S.J, O.L1, S.J2,
F.B3, J.P2} ..., to develop their research in various contexts by
generalizing or improving the multifractal formalism.

In this paper, we take place in the framework of the vectorial multifractal
formalism introduced by J. Peyrière \cite{J.P2} in 2004. We recall at the
end of this paragraph the results of this formalism that we are going to use
later. In the second section, we give an other upper bounding for packing
dimension \cite{C.T} of the set 
\begin{equation*}
X_{\chi }\left( \alpha ,E\right) =\left\{ x\in \mathbb{X};\text{\thinspace }%
\underset{r\rightarrow 0}{\lim \sup }\frac{\left\langle q,\chi
(x,r)\right\rangle }{\log r}\leq \left\langle q,\alpha \right\rangle ,\text{ 
}\forall q\in E\right\} ,
\end{equation*}%
where $\mathbb{X}$ is a metric space verifying the Besicovitch covering
property, $E$ is a subset of a separable real Banach space $\mathbb{E}$, $%
\chi $ is a function from $\mathbb{X\times }\left] 0,1\right] $ to the dual $%
\mathbb{E}^{\prime }$ and $\alpha \in \mathbb{E}^{\prime }.$

In the third section, we present some situations where our inequality is
finer than that made by J. Peyrière in \cite{J.P2}.

In what follows, we recall the vectorial multifractal formalism introduced
by J. Peyrière in \cite{J.P2}.

For $A\subset \mathbb{X}$, $q\in \mathbb{E}$, $t\in \mathbb{R}$ and $%
\varepsilon \in \left] 0,1\right] ,$ we set 
\begin{equation*}
\overline{P}_{\chi ,\varepsilon }^{q,t}(A)=\sup \left\{ \underset{i}{\sum }%
r_{i}^{t}e^{\left\langle q,\chi (x_{i},r_{i})\right\rangle }\right\} ,
\end{equation*}%
where the supremum is taken over all the centered $\varepsilon -$packing $%
(B\left( x_{i},r_{i}\right) )_{i\in I}$ of $A$. \newline
Then, we set%
\begin{equation*}
\overline{P}_{\chi }^{q,t}(A)=\underset{\varepsilon \rightarrow 0}{\lim }%
\overline{P}_{\chi ,\varepsilon }^{q,t}(A)
\end{equation*}%
and 
\begin{equation*}
P_{\chi }^{q,t}(A)=\inf \left\{ \underset{i}{\sum }\overline{P}_{\chi
}^{q,t}(A_{i});\text{ }A\subset \underset{i}{\cup }A_{i}\right\} .
\end{equation*}%
It is clear that 
\begin{equation}
P^{q,t}(A)=\inf \left\{ \underset{i}{\sum }\overline{P}_{\chi }^{q,t}(A_{i});%
\text{ }A=\underset{i}{\cup }A_{i}\right\}  \label{eq1v}
\end{equation}%
and 
\begin{equation}
P_{\chi }^{q,t}(A)=\inf \left\{ \underset{i}{\sum }\overline{P}_{\chi
}^{q,t}(A_{i}),\text{ }\left( \underset{i}{\cup }A_{i}\right) \ \text{is a
partition of }A\right\} .  \label{eq2v}
\end{equation}%
We denote by $\Delta _{\chi }^{q}(A)$ and $\Dim_{\chi }^{q}(A)$ the
dimensions of $A$ characterized by 
\begin{equation*}
\overline{P}_{\chi }^{q,t}(A)=\left\{ 
\begin{array}{c}
+\infty ,\text{ if }t<\Delta _{\chi }^{q}(A), \\ 
0,\text{ if }t>\Delta _{\chi }^{q}(A),%
\end{array}%
\right.
\end{equation*}%
and 
\begin{equation*}
P_{\chi }^{q,t}(A)=\left\{ 
\begin{array}{c}
+\infty \text{, if }t<\Dim_{\chi }^{q}(A), \\ 
0,\text{ if }t>\Dim_{\chi }^{q}(A).%
\end{array}%
\right.
\end{equation*}

For $\mathbb{X}=\mathbb{R}^{d}$, $\mathbb{E}=\mathbb{R}$, $\mu $ a Borel
probability measure on $\mathbb{R}^{d}$, and considering the function $\chi $
defined by%
\begin{equation*}
\left\langle q,\chi (x_{i},r_{i})\right\rangle =q\log \mu (B(x_{i},r_{i})).
\end{equation*}%
for all centered $\varepsilon -$packing $(B\left( x_{i},r_{i}\right) )_{i\in
I}$ of $A,$ we found the formalism introduced by L. Olsen \cite{O.L1}, in
particular we get 
\begin{equation*}
\Delta _{\chi }^{q}(A)=\Delta _{\mu }^{q}(A)\text{ \ \ and \ \ \ }\Dim%
_{\varkappa }^{q}(A)=\Dim_{\mu }^{q}(A).
\end{equation*}%
Furthermore, note also that for the trivial case $\chi =0$, we obtain the
prepacking dimension and the packing dimension of $A$, ie 
\begin{equation*}
\Delta _{\chi }^{q}(A)=\Delta (A)\text{ \ \ and \ \ \ }\Dim_{\varkappa
}^{q}(A)=\Dim(A).
\end{equation*}

The following proposition and theorem are established in \cite{J.P2}.

\begin{proposition}
\label{prop1v}Write $\Lambda _{\chi }(q)=\Delta _{\chi }^{q}(\mathbb{X})$
and $B_{\chi }(q)=\Dim_{\chi }^{q}(\mathbb{X}).$ Then\newline
i. $B_{\chi }\leq \Lambda _{\chi }.$\newline
ii. The functions $\Lambda _{\chi }:q\mapsto \Lambda _{\chi }(q)$ and $%
B_{\chi }:q\mapsto B_{\chi }(q)$ are convex.
\end{proposition}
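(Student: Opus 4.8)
The plan is to reduce everything to Hölder's inequality applied to the pre-packing set function $\overline{P}_{\chi}^{q,t}$, and then to read off the two statements. Part (i) is immediate: applying the definition of $P_{\chi}^{q,t}$ to the trivial cover of $A$ by itself gives $P_{\chi}^{q,t}(A)\le\overline{P}_{\chi}^{q,t}(A)$ for every $A\subset\mathbb X$; hence $t>\Lambda_\chi(q)=\Delta_{\chi}^{q}(\mathbb X)$ implies $\overline{P}_{\chi}^{q,t}(\mathbb X)=0$, so $P_{\chi}^{q,t}(\mathbb X)=0$ and $t\ge\Dim_{\chi}^{q}(\mathbb X)=B_\chi(q)$; letting $t\downarrow\Lambda_\chi(q)$ yields $B_\chi\le\Lambda_\chi$.

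For (ii), fix $q_1,q_2\in\mathbb E$, $\theta\in(0,1)$, and put $q=\theta q_1+(1-\theta)q_2$; for $t_1,t_2\in\mathbb R$ set $t=\theta t_1+(1-\theta)t_2$. For a centered $\varepsilon$-packing $(B(x_i,r_i))_i$ of a set $A$, writing $r_i^{\,t}e^{\langle q,\chi(x_i,r_i)\rangle}=\bigl(r_i^{\,t_1}e^{\langle q_1,\chi(x_i,r_i)\rangle}\bigr)^{\theta}\bigl(r_i^{\,t_2}e^{\langle q_2,\chi(x_i,r_i)\rangle}\bigr)^{1-\theta}$ and applying Hölder's inequality with exponents $1/\theta$ and $1/(1-\theta)$ to the sum over $i$ gives, after taking the supremum over packings and letting $\varepsilon\to0$,
\[
\overline{P}_{\chi}^{q,t}(A)\le\overline{P}_{\chi}^{q_1,t_1}(A)^{\theta}\,\overline{P}_{\chi}^{q_2,t_2}(A)^{1-\theta}.
\]
Thus, if $t_1>\Delta_{\chi}^{q_1}(A)$ and $t_2>\Delta_{\chi}^{q_2}(A)$ the right-hand side vanishes, so $t\ge\Delta_{\chi}^{q}(A)$; letting $t_1\downarrow\Delta_{\chi}^{q_1}(A)$ and $t_2\downarrow\Delta_{\chi}^{q_2}(A)$ shows that $q\mapsto\Delta_{\chi}^{q}(A)$ is convex for \emph{every} $A$, and with $A=\mathbb X$ this is the convexity of $\Lambda_\chi$.

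The convexity of $B_\chi$ is the delicate point. One cannot just insert the displayed inequality into the infimum defining $P_{\chi}^{q,t}$, because a common refinement of two covers that are separately efficient for $(q_1,t_1)$ and $(q_2,t_2)$ may make the associated sums blow up ($\overline{P}_{\chi}^{q,t}$ is not super-additive on partitions). I would instead use the reformulation $B_\chi(q)=\inf\{\sup_i\Delta_{\chi}^{q}(A_i):\mathbb X=\bigcup_i A_i,\ (A_i)_i\ \text{a countable cover}\}$: for ``$\le$'', if $t>\sup_i\Delta_{\chi}^{q}(A_i)$ then $\overline{P}_{\chi}^{q,t}(A_i)=0$ for all $i$, so $P_{\chi}^{q,t}(\mathbb X)=0$ and $t\ge B_\chi(q)$; for ``$\ge$'', if $t>B_\chi(q)$ then $P_{\chi}^{q,t}(\mathbb X)=0$, so some cover has $\sum_i\overline{P}_{\chi}^{q,t}(A_i)<\infty$, whence $\overline{P}_{\chi}^{q,t}(A_i)<\infty$ and $\Delta_{\chi}^{q}(A_i)\le t$ for each $i$.

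For each fixed cover, $q\mapsto\sup_i\Delta_{\chi}^{q}(A_i)$ is convex, being a supremum of the convex functions $q\mapsto\Delta_{\chi}^{q}(A_i)$ found above. Moreover the family of these functions, indexed by the countable covers of $\mathbb X$, is downward directed: passing from covers $(A_i)_i$, $(B_j)_j$ to the refinement $(A_i\cap B_j)_{i,j}$ and using monotonicity of $\overline{P}_{\chi}^{q,t}$ in its set argument gives $\Delta_{\chi}^{q}(A_i\cap B_j)\le\min\{\Delta_{\chi}^{q}(A_i),\Delta_{\chi}^{q}(B_j)\}$, hence $\sup_{i,j}\Delta_{\chi}^{q}(A_i\cap B_j)\le\min\{\sup_i\Delta_{\chi}^{q}(A_i),\sup_j\Delta_{\chi}^{q}(B_j)\}$ for every $q$. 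Since a pointwise infimum of a downward directed family of convex functions is convex — given $q_1,q_2$, choose members of the family nearly attaining $B_\chi(q_1)$ and $B_\chi(q_2)$, pass to a common lower bound in the family, and use its convexity — it follows that $B_\chi$ is convex, completing (ii). The main work is thus concentrated in the reformulation of $B_\chi$ and in checking the directedness; the rest is Hölder's inequality.
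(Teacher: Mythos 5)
Your argument is correct, but note that this paper does not prove Proposition \ref{prop1v} at all: it is quoted from Peyri\`ere \cite{J.P2}, so there is no in-paper proof to compare against. Your route is the standard one and essentially the one used in the cited literature (Peyri\`ere, and Olsen in the scalar case): part (i) from the trivial cover, convexity of $\Lambda_\chi$ (indeed of $q\mapsto\Delta_\chi^q(A)$ for every $A$) from H\"older's inequality applied packing by packing, and convexity of $B_\chi$ by first identifying $\Dim_\chi^q$ with $\inf\{\sup_i\Delta_\chi^q(A_i)\}$ over countable covers and then exploiting common refinements; in the literature the refinement-plus-H\"older step is usually done directly on two near-optimal covers, which is the same mechanism as your ``downward directed family'' lemma. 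You correctly flag the one place where a naive argument fails (one cannot push H\"older through the infimum defining $P_\chi^{q,t}$), and your fix is sound: the inf--sup reformulation is proved correctly in both directions, monotonicity of $\overline{P}_\chi^{q,t}$ in the set argument does give the directedness, and the $\varepsilon$-argument for the infimum of a directed family of convex functions is valid. Only cosmetic caveats remain: when passing to the limit $\varepsilon\to0$ in the H\"older inequality one should note that the indeterminate case $0\cdot\infty$ does not occur in the way you use it (both right-hand factors vanish), and the convexity statements should be read in the extended-real sense since $\Delta_\chi^q$ and $B_\chi$ may be infinite; neither affects the argument.
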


\begin{theorem}
\label{th1v}For $\alpha \in \mathbb{E}^{\prime }$ and $E\subset \mathbb{E}$
we set 
\begin{equation*}
X_{\chi }\left( \alpha ,E\right) =\left\{ x\in \mathbb{X};\text{\thinspace }%
\underset{r\rightarrow 0}{\lim \sup }\frac{\left\langle q,\chi
(x,r)\right\rangle }{\log r}\leq \left\langle q,\alpha \right\rangle ,\text{ 
}\forall q\in E\right\} ,
\end{equation*}%
then 
\begin{equation*}
\Dim(X_{\chi }\left( \alpha ,E\right) )\leq \underset{q\in E}{\inf }%
(\left\langle q,\alpha \right\rangle +B_{\chi }(q)).
\end{equation*}
\end{theorem}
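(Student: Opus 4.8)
The plan is to reduce the statement to a single exponent $q\in E$ and then run the standard multifractal covering argument, comparing the classical prepacking premeasure with the $\chi$-weighted one of the vectorial formalism. Fix $q\in E$; since taking the infimum over $q\in E$ at the end gives the full claim, it suffices to show $\Dim(X_\chi(\alpha,E))\le\langle q,\alpha\rangle+B_\chi(q)$. Because imposing the defining inequality only for this one $q$ is a weaker condition, $X_\chi(\alpha,E)\subseteq Z:=X_\chi(\alpha,\{q\})$, so I only need to bound $\Dim(Z)$. Fix a real $t>\langle q,\alpha\rangle+B_\chi(q)$, pick $\eta\in(0,\,t-\langle q,\alpha\rangle-B_\chi(q))$, and set $s:=t-\langle q,\alpha\rangle-\eta$, so that $s>B_\chi(q)$. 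Write $\overline{P}^{t}$ and $P^{t}$ for the classical prepacking and packing premeasures, i.e. those obtained from this formalism with $\chi\equiv 0$, so that $\Dim(A)=\inf\{t:P^{t}(A)=0\}$; the goal becomes $P^{t}(Z)=0$.

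The key pointwise estimate is the following. If $x\in Z$, then $\limsup_{r\to0}\langle q,\chi(x,r)\rangle/\log r\le\langle q,\alpha\rangle$ yields, for every $r$ below some $x$-dependent threshold (which we may take $\le1$), the inequality $\langle q,\chi(x,r)\rangle\ge(\langle q,\alpha\rangle+\eta)\log r$ — the sense reverses since $\log r<0$ — equivalently $e^{\langle q,\chi(x,r)\rangle}\ge r^{\langle q,\alpha\rangle+\eta}$, and hence $r^{t}\le r^{s}\,e^{\langle q,\chi(x,r)\rangle}$. To make the threshold uniform, decompose $Z=\bigcup_{n\ge1}Z_n$ with $Z_n:=\{x\in Z:\ e^{\langle q,\chi(x,r)\rangle}\ge r^{\langle q,\alpha\rangle+\eta}\ \text{for every }0<r<1/n\}$. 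Then for any $B\subseteq Z_n$ and any centered $\varepsilon$-packing $(B(x_i,r_i))_i$ of $B$ with $\varepsilon<1/n$ one has $x_i\in Z_n$ and $r_i<1/n$ for all $i$, so $\sum_i r_i^{t}\le\sum_i r_i^{s}e^{\langle q,\chi(x_i,r_i)\rangle}\le\overline{P}_{\chi,\varepsilon}^{q,s}(B)$; letting $\varepsilon\to0$ gives $\overline{P}^{t}(B)\le\overline{P}_{\chi}^{q,s}(B)$ for every $B\subseteq Z_n$.

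It remains to pass from the premeasures $\overline{P}$ to the measures $P$. Since $s>B_\chi(q)=\Dim_\chi^{q}(\mathbb X)$, the characterization of $\Dim_\chi^{q}$ recalled above gives $P_{\chi}^{q,s}(\mathbb X)=0$, hence $P_{\chi}^{q,s}(Z_n)=0$ for each $n$ by monotonicity. Fix $n$ and $\delta>0$, choose a countable cover $Z_n\subseteq\bigcup_j A_j$ with $\sum_j\overline{P}_{\chi}^{q,s}(A_j)<\delta$, and replace each $A_j$ by $A_j\cap Z_n$ (which only decreases the terms, by monotonicity of $\overline{P}_{\chi}^{q,s}$). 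By the previous paragraph $\overline{P}^{t}(A_j)\le\overline{P}_{\chi}^{q,s}(A_j)$, so $P^{t}(Z_n)\le\sum_j\overline{P}^{t}(A_j)<\delta$; as $\delta$ was arbitrary, $P^{t}(Z_n)=0$ for every $n$. Countable subadditivity of the outer measure $P^{t}$ then gives $P^{t}(Z)\le\sum_nP^{t}(Z_n)=0$, whence, since $X_\chi(\alpha,E)\subseteq Z$, also $P^{t}(X_\chi(\alpha,E))=0$ and $\Dim(X_\chi(\alpha,E))\le t$. Letting $t$ decrease to $\langle q,\alpha\rangle+B_\chi(q)$ and then taking the infimum over $q\in E$ completes the proof.

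I expect the only point needing care to be the interaction between the $x$-dependent threshold in the pointwise estimate and the two-stage (premeasure, then cover) construction of the packing measure: one cannot compare $\overline{P}^{t}(Z)$ with $\overline{P}_{\chi}^{q,s}(Z)$ directly, and using the cover $\{Z_n\}$ of $Z$ naively would produce divergent sums, so the decomposition has to be combined with a near-optimal cover for $P_{\chi}^{q,s}(Z_n)$, exactly as above. Everything else is bookkeeping with the sign of $\log r$ and with the definitions; in particular the Besicovitch covering property of $\mathbb X$ is not used for this inequality.
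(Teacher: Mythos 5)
Your proof is correct. Note that the paper does not actually prove Theorem~\ref{th1v}: it is recalled from Peyri\`ere \cite{J.P2}, so there is no in-paper proof to compare against; your argument is the standard one for this upper bound and matches the technique the paper itself uses later. Concretely, your exhaustion $Z=\bigcup_n Z_n$ by the sets where $e^{\langle q,\chi(x,r)\rangle}\geq r^{\langle q,\alpha\rangle+\eta}$ holds uniformly for $r<1/n$ is exactly the analogue of the sets $X_{\left\langle q,\alpha \right\rangle }(\eta ,p)$ introduced before Theorem~\ref{th3v}, and your chain of steps (pointwise estimate with the sign reversal from $\log r<0$, the premeasure comparison $\overline{P}^{t}(B)\leq \overline{P}_{\chi }^{q,s}(B)$ for $B\subset Z_{n}$ with $s>B_{\chi }(q)$, a near-optimal cover witnessing $P_{\chi }^{q,s}(Z_{n})=0$ intersected with $Z_{n}$ before applying the comparison, then countable subadditivity of $P^{t}$) is the standard Olsen--Peyri\`ere route. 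You also correctly identified the two usual pitfalls -- that the comparison of premeasures cannot be applied to $Z$ directly but only to subsets of a fixed $Z_{n}$, and that the cover must be intersected with $Z_{n}$ -- and you are right that the Besicovitch covering property plays no role in this upper bound (it is only needed elsewhere, e.g.\ in Proposition~\ref{prop5symbis}).
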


\section{An other upper bounding for $\Dim(X_{\protect\chi }\left( \protect%
\alpha ,E\right) )$}

Let $\varepsilon >0$ be a real number and $k\geq 1$ be an integer. A family $%
(B(x_{i},r_{i}))_{i\in I}$ is called a centered $\varepsilon -k-$Besicovich
\ packing of a set $A$ when $I=I_{1}\cup ...\cup I_{s}$ with \ $1\leq s\leq
k $ and $\left( B(x_{i},r_{i})\right) _{i\in I_{j}}$ a centered $\varepsilon
- $packing of $A$ for all $1\leq j\leq s$.\newline
Let $(u_{\varepsilon })_{\varepsilon >0}$ be a decreasing family of numbers
such that $\varepsilon \leq u_{\varepsilon }$ and $\underset{\varepsilon
\rightarrow 0}{\lim }u_{\varepsilon }=0$. \newline
For $q\in E$, $A$ $\subset \mathbb{X}$ and $(B(x_{i},r_{i}))_{i\in I}$ a
centered $\varepsilon -$packing of $A$, we consider all the families $%
(B(y_{i},\delta _{i}))_{i\in I}$ that are centered $u_{\varepsilon }-k-$%
Besicovitch packing of $A$ and we set 
\begin{equation*}
L_{\varepsilon ,(B(x_{i},r_{i}))_{i\in I}}^{q,k}(A)=\inf \left( \underset{%
i\in I}{\sup }\left( \frac{\left\langle q,\chi (y_{i},\delta
_{i})\right\rangle }{\log r_{i}}\right) \right) ,
\end{equation*}%
where the infimum is taken over all the centered $u_{\varepsilon }-k-$%
Besicovich packing $(B(y_{i},\delta _{i}))_{i\in I}$ of $A.$\newline
It is clear that 
\begin{equation}
L_{\varepsilon ,(B(x_{i},r_{i}))_{i\in I}}^{q,k}(A)\leq \underset{i\in I}{%
\sup }\left( \frac{\left\langle q,\chi (x_{i},r_{i})\right\rangle }{\log
r_{i}}\right) .  \label{eq5v}
\end{equation}%
Write 
\begin{equation*}
L_{\varepsilon }^{q,k}(A)=\sup \left\{ L_{\varepsilon
,(B(x_{i},r_{i}))_{i\in I}}^{q,k}(A)\right\} ,
\end{equation*}%
where the supremum is taken over all the centered $\varepsilon -$packing $%
(B\left( x_{i},r_{i}\right) )_{i\in I}$ of $A.$\newline
We remark that for $\varepsilon <\varepsilon ^{\prime }$, $L_{\varepsilon
^{\prime }}^{q,k}(A)>L_{\varepsilon }^{q,k}(A)$, then we define 
\begin{equation*}
L^{q,k}(A)=\underset{\varepsilon \rightarrow 0}{\lim }L_{\varepsilon
}^{q,k}(A).
\end{equation*}%
As the sequence $\left( L^{q,k}(A)\right) _{k}$ is decreasing, write 
\begin{equation*}
L^{q}(A)=\underset{k\rightarrow +\infty }{\lim }L^{q,k}(A).
\end{equation*}

Before giving our new inequality involving $\Dim(X_{\chi }\left( \alpha
,E\right) )$, we first illustrate our main idea on the set $A^{\left\langle
q,\alpha \right\rangle }$ defined for $\alpha \in \mathbb{E}^{\prime }$, $%
q\in E$ and $r_{0}>0$ by 
\begin{equation}
A^{\left\langle q,\alpha \right\rangle }=\left\{ x\in \mathbb{X};\text{%
\thinspace }r^{\left\langle q,\alpha \right\rangle }\leq e^{\left\langle
q,\chi (x,r)\right\rangle },\text{ for }r<r_{0}\right\} .  \label{pA}
\end{equation}

\begin{proposition}
\label{prop2v}\ 
\begin{equation*}
L^{q}(A^{\left\langle q,\alpha \right\rangle })\leq \left\langle q,\alpha
\right\rangle .
\end{equation*}
\end{proposition}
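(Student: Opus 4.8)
The goal is to show $L^q(A^{\langle q,\alpha\rangle}) \le \langle q,\alpha\rangle$, where $A^{\langle q,\alpha\rangle}$ consists of points $x$ for which $r^{\langle q,\alpha\rangle} \le e^{\langle q,\chi(x,r)\rangle}$ for all $r < r_0$. The plan is to unwind the definitions of $L^q$, $L^{q,k}$, $L^{q,k}_\varepsilon$ and exhibit, for every $\varepsilon$-packing of $A^{\langle q,\alpha\rangle}$, a single convenient choice of $u_\varepsilon$-$k$-Besicovitch packing that makes the defining supremum small. First I would fix $k = 1$ (or any $k\ge 1$), fix $\varepsilon < r_0$ and an arbitrary centered $\varepsilon$-packing $(B(x_i,r_i))_{i\in I}$ of $A^{\langle q,\alpha\rangle}$. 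The key observation is that the \emph{same} family $(B(x_i,r_i))_{i\in I}$ is itself a centered $u_\varepsilon$-$k$-Besicovitch packing of $A^{\langle q,\alpha\rangle}$, since $r_i \le \varepsilon \le u_\varepsilon$ and a $\varepsilon$-packing is trivially a $u_\varepsilon$-$1$-Besicovitch packing. Hence, taking $(y_i,\delta_i) = (x_i,r_i)$ in the infimum defining $L^{q,k}_{\varepsilon,(B(x_i,r_i))_i}$, we get
\[
L^{q,k}_{\varepsilon,(B(x_i,r_i))_{i\in I}}(A^{\langle q,\alpha\rangle}) \le \sup_{i\in I}\frac{\langle q,\chi(x_i,r_i)\rangle}{\log r_i}.
\]

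Now the membership condition kicks in: since each $x_i \in A^{\langle q,\alpha\rangle}$ and $r_i < r_0$, we have $r_i^{\langle q,\alpha\rangle} \le e^{\langle q,\chi(x_i,r_i)\rangle}$, i.e. $\langle q,\alpha\rangle \log r_i \le \langle q,\chi(x_i,r_i)\rangle$. Because $r_i < 1$ (so $\log r_i < 0$), dividing by $\log r_i$ reverses the inequality and yields $\frac{\langle q,\chi(x_i,r_i)\rangle}{\log r_i} \le \langle q,\alpha\rangle$ for every $i\in I$. Taking the supremum over $i$ gives $L^{q,k}_{\varepsilon,(B(x_i,r_i))_{i\in I}}(A^{\langle q,\alpha\rangle}) \le \langle q,\alpha\rangle$, and since the packing was arbitrary, $L^{q,k}_\varepsilon(A^{\langle q,\alpha\rangle}) \le \langle q,\alpha\rangle$.

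Finally I would pass to the limits. Letting $\varepsilon \to 0$ gives $L^{q,k}(A^{\langle q,\alpha\rangle}) \le \langle q,\alpha\rangle$, and letting $k \to +\infty$ gives $L^q(A^{\langle q,\alpha\rangle}) \le \langle q,\alpha\rangle$, which is the claim. I expect no serious obstacle here: the only point requiring a little care is the bookkeeping around the sign of $\log r_i$ (which forces $\varepsilon < 1$, consistent with the standing hypotheses on $(u_\varepsilon)$ and on packings indexed by $]0,1]$) and the verification that an $\varepsilon$-packing legitimately counts as a $u_\varepsilon$-$k$-Besicovitch packing for the relevant $k$. One should also make sure $\varepsilon$ is taken small enough that $\varepsilon < r_0$, so that the defining inequality of $A^{\langle q,\alpha\rangle}$ applies to all radii appearing in the packing; this is harmless since we are taking $\varepsilon \to 0$ anyway.
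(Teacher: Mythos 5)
Your proof is correct and follows essentially the same route as the paper: you bound $L^{q,k}_{\varepsilon,(B(x_i,r_i))_{i\in I}}$ by $\sup_i \langle q,\chi(x_i,r_i)\rangle/\log r_i$ (this is exactly the paper's inequality (\ref{eq5v}), which you justify by noting that the packing itself is an admissible $u_\varepsilon$-$k$-Besicovitch packing), then use the defining property of $A^{\langle q,\alpha\rangle}$ together with $\log r_i<0$, take the supremum over packings, and pass to the limits in $\varepsilon$ and $k$. Your extra care about the sign of $\log r_i$ and the requirement $\varepsilon<r_0$ is a welcome explicit check of points the paper leaves implicit.
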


\begin{proof}
let $\varepsilon <r_{0}$ and $(B(x_{i},r_{i}))_{i\in I}$ a centered $%
\varepsilon -$packing of $A^{\left\langle q,\alpha \right\rangle }$. Thanks
to the characteristic property of $A^{\left\langle q,\alpha \right\rangle }$
(\ref{pA}), it comes for all $i\in I,$ 
\begin{equation*}
\frac{\left\langle q,\chi (x_{i},r_{i})\right\rangle }{\log r_{i}}\leq
\left\langle q,\alpha \right\rangle ,
\end{equation*}%
hence 
\begin{equation*}
\underset{i\in I}{\sup }\frac{\left\langle q,\chi (x_{i},r_{i})\right\rangle 
}{\log r_{i}}\leq \left\langle q,\alpha \right\rangle ,
\end{equation*}%
from the inequality (\ref{eq5v}), we deduce that 
\begin{equation*}
L_{\varepsilon ,(B(x_{i},r_{i}))_{i\in I}}^{q,k}(A^{\left\langle q,\alpha
\right\rangle })\leq \left\langle q,\alpha \right\rangle ,
\end{equation*}%
while considering the supremum over all centered $\varepsilon -$packing, it
results that 
\begin{equation*}
L_{\varepsilon }^{q,k}(A^{\left\langle q,\alpha \right\rangle })\leq
\left\langle q,\alpha \right\rangle .
\end{equation*}%
Letting $\varepsilon \rightarrow 0,$ we obtain that 
\begin{equation*}
L^{q,k}(A^{\left\langle q,\alpha \right\rangle })\leq \left\langle q,\alpha
\right\rangle ,
\end{equation*}%
then letting $k\rightarrow +\infty ,$ it comes that 
\begin{equation*}
L^{q}(A^{\left\langle q,\alpha \right\rangle })\leq \left\langle q,\alpha
\right\rangle .
\end{equation*}
\end{proof}

\begin{theorem}
\label{thIv}Let $\alpha \in \mathbb{E}^{\prime }$ and $q\in E$. \newline
For $t<0$ we set $\Phi _{q}(t)=\inf \left\{ \gamma >0;\text{ }t\left\langle
q,\alpha \right\rangle >B_{\chi }((\gamma -t)q)\right\} $. Then, 
\begin{equation*}
\Dim(A^{\left\langle q,\alpha \right\rangle })\leq \Phi
_{q}(t)L^{q}(A^{\left\langle q,\alpha \right\rangle }).
\end{equation*}
\end{theorem}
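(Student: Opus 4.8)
The plan is to bound $\Dim(A^{\langle q,\alpha\rangle})$ by working directly with the prepacking/packing premeasure $\overline{P}^{(\gamma-t)q,\,s}$ and relating a covering by centered packings to the quantity $L^q$. Fix $t<0$ and let $\gamma>\Phi_q(t)$, so that $t\langle q,\alpha\rangle>B_\chi((\gamma-t)q)$; the goal is to show $\Dim(A^{\langle q,\alpha\rangle})\le \gamma\,L^q(A^{\langle q,\alpha\rangle})$, and then let $\gamma\downarrow\Phi_q(t)$. Write $L=L^q(A^{\langle q,\alpha\rangle})$ and fix any $\ell>L$; by definition of $L$ as $\lim_k\lim_{\varepsilon\to 0}L^{q,k}_\varepsilon$, there is an integer $k$ and, for every small $\varepsilon$, a bound $L^{q,k}_\varepsilon(A^{\langle q,\alpha\rangle})<\ell$. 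Unravelling this: for \emph{every} centered $\varepsilon$-packing $(B(x_i,r_i))_{i\in I}$ of $A^{\langle q,\alpha\rangle}$ there exists a centered $u_\varepsilon$-$k$-Besicovitch packing $(B(y_i,\delta_i))_{i\in I}$ (indexed by the same $I$) with $\sup_i \frac{\langle q,\chi(y_i,\delta_i)\rangle}{\log r_i}<\ell$, i.e.\ $\langle q,\chi(y_i,\delta_i)\rangle > \ell\log r_i$ for all $i$ (recall $\log r_i<0$), equivalently $e^{\langle q,\chi(y_i,\delta_i)\rangle} > r_i^{\ell}$.

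The key computational step is to estimate $\sum_i r_i^{s}$ for a test exponent $s$ against $\sum_i r_i^{s-\ell}\cdot r_i^{\ell}$ and then push the $r_i^{\ell}$ factor onto the Besicovitch packing via the inequality just derived. Concretely, for the $\varepsilon$-packing I would write, using $r_i^{\ell}<e^{\langle q,\chi(y_i,\delta_i)\rangle}$,
\begin{equation*}
\sum_i r_i^{s} = \sum_i r_i^{s-\ell}\, r_i^{\ell} \le \sum_i r_i^{s-\ell}\, e^{\langle q,\chi(y_i,\delta_i)\rangle}.
\end{equation*}
Here I must be careful about the sign of $s-\ell$ and whether the $r_i$ are bounded; choosing $s=\gamma\ell$ and noting $\gamma>0$, one has $s-\ell=(\gamma-1)\ell$, and since the $r_i\le\varepsilon<1$ the factor $r_i^{s-\ell}$ is controlled once we know the sign — this is exactly where the exponent $(\gamma-t)q$ enters. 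In fact the cleanest route is to split the exponent as $r_i^{\gamma\ell}$ and compare with $r_i^{-t\ell}\,r_i^{(\gamma+|t|)\ell}$ so that the surviving power matches $(\gamma-t)$ after we also invoke the defining property of $A^{\langle q,\alpha\rangle}$, which gives $r_i^{\langle q,\alpha\rangle}\le e^{\langle q,\chi(x_i,r_i)\rangle}$ — but more to the point, for the $u_\varepsilon$-$k$-Besicovitch packing $(B(y_i,\delta_i))$, after splitting $I$ into at most $k$ centered $u_\varepsilon$-packings $I=I_1\cup\cdots\cup I_k$, each sub-sum $\sum_{i\in I_j} \delta_i^{\,s'}e^{\langle q,\chi(y_i,\delta_i)\rangle}$ is bounded by $\overline{P}^{q,s'}_{\chi,u_\varepsilon}(A^{\langle q,\alpha\rangle})$, hence the whole sum by $k\,\overline{P}^{q,s'}_{\chi,u_\varepsilon}(A^{\langle q,\alpha\rangle})$.

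Assembling: for an appropriate choice $s=\gamma\ell$ and $s'=(\gamma-t)q$-exponent — more precisely, combining the bound $r_i^{\gamma\ell}\le e^{\langle q,\chi(y_i,\delta_i)\rangle}\cdot r_i^{(\gamma-1)\ell}$ with the relation between $r_i$ and $\delta_i$ coming from the packing structure (both families are $u_\varepsilon$-fine, so $\delta_i\le u_\varepsilon$ and the $r_i$ are comparable up to the roles they play) — one arrives at an estimate of the form
\begin{equation*}
\overline{P}^{\,\gamma\ell}_{\chi,\varepsilon}(A^{\langle q,\alpha\rangle}) \le C\, k\, \overline{P}^{(\gamma-t)q,\,(\gamma-t)\ell}_{\chi,u_\varepsilon}(A^{\langle q,\alpha\rangle})
\end{equation*}
for a constant $C$ absorbing powers of $u_\varepsilon<1$, valid because $t\langle q,\alpha\rangle > B_\chi((\gamma-t)q)$ forces the right-hand premeasure with $t$-parameter $(\gamma-t)\ell$ — chosen just above $(\gamma-t)$ times the critical value — to vanish as $\varepsilon\to 0$. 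Letting $\varepsilon\to 0$ kills the right side (since $\Dim_\chi^{(\gamma-t)q}(\mathbb{X})=B_\chi((\gamma-t)q)<(\gamma-t)\ell/\ell\cdot\ell$, i.e.\ the exponent exceeds the critical one), so $\overline{P}^{\,\gamma\ell}_{\chi}(A^{\langle q,\alpha\rangle})=0$, whence $\Delta_\chi^{0}(A^{\langle q,\alpha\rangle})=\Delta(A^{\langle q,\alpha\rangle})\le\gamma\ell$; passing from the prepacking bound to $\Dim$ via the countable-stability definition (\ref{eq1v})–(\ref{eq2v}) and then letting $\ell\downarrow L$ and $\gamma\downarrow\Phi_q(t)$ gives $\Dim(A^{\langle q,\alpha\rangle})\le\Phi_q(t)\,L$.

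I expect the main obstacle to be bookkeeping the exponents and the sign of $\log r_i$ correctly so that the factor $r_i^{(\gamma-1)\ell}$ (or $u_\varepsilon$-powers) is genuinely controlled rather than blowing up — in particular verifying that the "loss'' in passing from the $\varepsilon$-packing $(B(x_i,r_i))$ to the Besicovitch packing $(B(y_i,\delta_i))$ only costs a bounded multiplicative constant and at most a factor $k$, and that the resulting $t$-exponent on the right lands strictly above $B_\chi((\gamma-t)q)$ thanks to the strict inequality defining $\Phi_q(t)$. A secondary technical point is upgrading from the prepacking premeasure $\overline{P}_\chi$ to the packing measure $P_\chi$ (and hence $\Dim$) via a countable decomposition; this is routine given the $\sigma$-stability built into the definitions, but one should note that $A^{\langle q,\alpha\rangle}$, and each piece of any partition of it, still satisfies the defining inequality (\ref{pA}), so the argument applies verbatim to every piece.
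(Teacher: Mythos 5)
Your overall strategy --- combine the bound coming from $L^{q}$ with the defining property (\ref{pA}) and transfer the estimate to a premeasure over the Besicovitch packing at the cost of a factor $k$ --- is the right one, but two steps as written do not work. First, the exponent bookkeeping: to land on a genuine premeasure sum, the leftover radius power must sit on the Besicovitch radii $\delta_{i}$, not on $r_{i}$, and that power must be $t\left\langle q,\alpha \right\rangle$. The correct combination is to apply (\ref{pA}) at the points $y_{i}\in A^{\left\langle q,\alpha \right\rangle }$ and radii $\delta _{i}$, i.e. $\delta _{i}^{\left\langle q,\alpha \right\rangle }\leq e^{\left\langle q,\chi (y_{i},\delta _{i})\right\rangle }$, raise it to the negative power $t$, and multiply it with $r_{i}^{\lambda }\leq e^{\left\langle q,\chi (y_{i},\delta _{i})\right\rangle }$ raised to the power $\gamma$ (here $\lambda$ plays the role of your $\ell$); this yields $r_{i}^{\gamma \lambda }\leq e^{(\gamma -t)\left\langle q,\chi (y_{i},\delta _{i})\right\rangle }\delta _{i}^{t\left\langle q,\alpha \right\rangle }$, whose right side is exactly a summand of $\overline{P}_{\chi ,u_{\varepsilon }}^{(\gamma -t)q,\,t\left\langle q,\alpha \right\rangle }$. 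You instead keep factors $r_{i}^{s-\ell }$ and propose to trade them for powers of $\delta _{i}$ on the grounds that ``the $r_{i}$ are comparable'' to the $\delta _{i}$: they are not --- the Besicovitch packing is an arbitrary competitor in an infimum and there is no relation between $\delta _{i}$ and $r_{i}$ beyond both being small; likewise invoking (\ref{pA}) at $(x_{i},r_{i})$ is useless, since $\chi (x_{i},r_{i})$ never enters the Besicovitch premeasure. Your target exponent $(\gamma -t)\ell $ is also not the one the hypothesis $t\left\langle q,\alpha \right\rangle >B_{\chi }((\gamma -t)q)$ speaks about.

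Second, and more seriously, that hypothesis controls the packing measure $P_{\chi }$, not the premeasure $\overline{P}_{\chi }$: vanishing of $\overline{P}_{\chi ,u_{\varepsilon }}^{(\gamma -t)q,s}(\mathbb{X})$ as the scale tends to $0$ is governed by $\Lambda _{\chi }((\gamma -t)q)$, which may be strictly larger than $B_{\chi }((\gamma -t)q)$. So your claim that the right-hand premeasure over $A^{\left\langle q,\alpha \right\rangle }$ ``vanishes as $\varepsilon \rightarrow 0$'' does not follow; moreover your sketch would bound $\Delta (A^{\left\langle q,\alpha \right\rangle })$, which is stronger than the theorem and not a consequence of a bound on $B_{\chi }$ alone. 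The missing idea --- the heart of the paper's proof --- is to use $P_{\chi }^{(\gamma -t)q,\,t\left\langle q,\alpha \right\rangle }(A^{\left\langle q,\alpha \right\rangle })=0$ together with (\ref{eq1v}) to write $A^{\left\langle q,\alpha \right\rangle }=\cup _{m}A_{m}$ with $\overline{P}_{\chi }^{(\gamma -t)q,\,t\left\langle q,\alpha \right\rangle }(A_{m})<+\infty $ for every $m$, run the packing estimate on each piece (finiteness, not vanishing, is all that is needed to get $\Delta (A_{m})\leq \gamma \lambda $), and then conclude with $\Dim \leq \Delta $ on each piece and countable stability of $\Dim$. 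You mention a countable decomposition only as ``routine'' post-processing to pass from $\overline{P}_{\chi }$ to $P_{\chi }$; in fact the decomposition must come first, because without it the premeasure you need on the right-hand side has no reason to be finite.
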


\begin{proof}
For $t<0$ and $\gamma >0$ such that $t\left\langle q,\alpha \right\rangle
>B_{\chi }((\gamma -t)q,$ it is clear that $P_{\chi }^{(\gamma
-t)q,t\left\langle q,\alpha \right\rangle }(\mathbb{X})=0,$ Then $P_{\chi
}^{(\gamma -t)q,\left\langle q,\alpha \right\rangle }(A^{\left\langle
q,\alpha \right\rangle })=0.$\newline
From the equality (\ref{eq1v}), we write 
\begin{equation}
A^{\left\langle q,\alpha \right\rangle }=\underset{m\in M}{\cup }A_{m}
\label{eqp0v}
\end{equation}%
such that for all $m\in M,$%
\begin{equation}
\overline{P}_{\chi }^{(\gamma -t)q,t\left\langle q,\alpha \right\rangle
}(A_{m})<+\infty .  \label{eqp1v}
\end{equation}%
Let $\lambda >L^{q}\left( A^{\left\langle q,\alpha \right\rangle }\right) ,$
let us prove first that for all $m$ $\in M$, 
\begin{equation*}
\triangle (A_{m})\leq \gamma \lambda .
\end{equation*}%
As $A_{m}\subset A^{\left\langle q,\alpha \right\rangle }$ and $\lambda
>L^{q}(A_{m}),$ then there exist an integer $k\geq 1$ and a real number $%
\varepsilon _{0}<r_{0}$ such that for all $\varepsilon <\varepsilon _{0},$ 
\begin{equation*}
L_{\varepsilon }^{q,k}(A_{m})<\lambda .
\end{equation*}%
It comes that for all tout centered $\varepsilon -$packing $(B(x_{i},r_{i}))$
of $A_{m},$ there exists a centered $u_{\varepsilon }-k-$Besicovitch packing 
$(B(y_{i},\delta _{i}))_{i\in I}$ of $A_{m}$ such that for all $i\in I,$ 
\begin{equation*}
\dfrac{\left\langle q,\chi (y_{i},\delta _{i})\right\rangle }{\log r_{i}}%
<\lambda ,
\end{equation*}%
so that 
\begin{equation}
r_{i}^{\lambda }<e^{\left\langle q,\chi (y_{i},\delta _{i})\right\rangle }.
\label{eqiv}
\end{equation}%
Thanks to the characteristic property of $A^{\left\langle q,\alpha
\right\rangle }$ (\ref{pA}), it comes that 
\begin{equation}
\delta _{i}^{\left\langle q,\alpha \right\rangle }<e^{\left\langle q,\chi
(y_{i},\delta _{i})\right\rangle }.  \label{eqiiv}
\end{equation}%
Thus from the inequalities (\ref{eqiv}) and (\ref{eqiiv}), we obtain that
for all $\gamma >0$ and $t<0,$ 
\begin{equation*}
r_{i}^{\gamma \lambda }\leq e^{(\gamma -t)\left\langle q,\chi (y_{i},\delta
_{i})\right\rangle }\delta _{i}^{t\left\langle q,\alpha \right\rangle }.
\end{equation*}%
Using the equality $I=I_{1}\cup ...\cup I_{s}$ with $1\leq s\leq k$ and $%
\left( B(x_{i},r_{i})\right) _{i\in I_{j}}$ a centered $\varepsilon -$%
packing of $A_{m}$ for all $1\leq j\leq s$, it follows that 
\begin{equation*}
\underset{i\in I}{\sum }r_{i}^{\gamma \lambda }\leq \underset{i\in I}{\sum }%
e^{(\gamma -t)\left\langle q,\chi (y_{i},\delta _{i})\right\rangle }\delta
_{i}^{t\left\langle q,\alpha \right\rangle }=\underset{j=1}{\overset{s}{\sum 
}}\underset{i\in I_{j}}{\sum }e^{(\gamma -t)\left\langle q,\chi
(y_{i},\delta _{i})\right\rangle }\delta _{i}^{t\left\langle q,\alpha
\right\rangle }.
\end{equation*}%
It results that 
\begin{equation}
\underset{i\in I}{\sum }r_{i}^{\gamma \lambda }\leq k\overline{P}_{\chi
,u_{\varepsilon }}^{(\gamma -t)q,t\left\langle q,\alpha \right\rangle
}(A_{m}).  \label{eqp2v}
\end{equation}%
We note that from the inequality (\ref{eqp1v}), there exists $\varepsilon
_{1}>0$ such that for $u_{\varepsilon }<\varepsilon _{1},$ 
\begin{equation*}
\overline{P}_{\chi ,u_{\varepsilon }}^{(\gamma -t)q,t\left\langle q,\alpha
\right\rangle }(A_{m})<+\infty .
\end{equation*}%
Then from the inequality (\ref{eqp2v}) it comes that for all $m$ $\in M$, 
\begin{equation*}
\triangle (A_{m})\leq \gamma \lambda .
\end{equation*}%
Therefore 
\begin{equation*}
\Dim(A_{m})\leq \gamma \lambda ,\text{ }m\in M.
\end{equation*}%
From the equality (\ref{eqp0v}), we write 
\begin{equation*}
\Dim(A^{\left\langle q,\alpha \right\rangle })\leq \gamma \lambda ,\text{ }%
m\in M.
\end{equation*}%
Finally, for all $t<0,$ we obtain 
\begin{equation*}
\Dim(A^{\left\langle q,\alpha \right\rangle })\leq \Phi
_{q}(t)L^{q}(A^{\left\langle q,\alpha \right\rangle }).
\end{equation*}
\end{proof}

Thereafter, let $\alpha \in \mathbb{E}^{\prime }$ and $q\in E.$\newline
We set 
\begin{equation*}
\Phi _{q}=\underset{t<0}{\inf }\left( \Phi _{q}(t)\right) .
\end{equation*}%
and 
\begin{equation*}
X_{\chi }^{q}\left( \alpha \right) =\left\{ x\in \mathbb{X};\text{\thinspace 
}\underset{r\rightarrow 0}{\lim \sup }\frac{\left\langle q,\chi
(x,r)\right\rangle }{\log r}\leq \left\langle q,\alpha \right\rangle
\right\} .
\end{equation*}%
For all real number $\eta >0$ and $p\geq 1$ an integer, we set 
\begin{equation*}
X_{\left\langle q,\alpha \right\rangle }(\eta ,p)=\left\{ x\in X_{\chi
}\left( \alpha \right) ;\text{ }r^{\left\langle q,\alpha \right\rangle +\eta
}\leq e^{\left\langle q,\chi (x,r)\right\rangle }\text{ for }r<\frac{1}{p}%
\right\}
\end{equation*}%
and 
\begin{equation*}
T_{\chi }^{q}(\alpha ,\eta ,p)=\underset{A\subset X_{\left\langle q,\alpha
\right\rangle }(\eta ,p)}{\sup }L^{q}(A)\text{,}
\end{equation*}%
\begin{equation*}
T_{\chi }^{q}(\alpha ,\eta )=\underset{p\rightarrow +\infty }{\lim }T_{\chi
}^{q}(\alpha ,\eta ,p),
\end{equation*}%
\begin{equation*}
T_{\chi }^{q}(\alpha )=\underset{\eta \rightarrow 0^{+}}{\lim }T_{\chi
}^{q}(\alpha ,\eta ).
\end{equation*}

\begin{theorem}
\label{th3v} 
\begin{equation*}
\Dim(X_{\chi }\left( \alpha ,E\right) )\leq \underset{q\in E}{\inf }\left\{
\Phi _{q}T_{\chi }^{q}(\alpha )\right\} .
\end{equation*}
\end{theorem}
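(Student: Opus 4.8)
The plan is to fix $q\in E$ and bound $\Dim(X_\chi^q(\alpha))$ first; since $q$ is one of the functionals defining $X_\chi(\alpha,E)$ we have $X_\chi(\alpha,E)\subseteq X_\chi^q(\alpha)$, and packing dimension is monotone. The crucial observation is that for every fixed $\eta>0$,
\begin{equation*}
X_\chi^q(\alpha)=\bigcup_{p\geq 1}X_{\langle q,\alpha\rangle}(\eta,p).
\end{equation*}
The inclusion $\supseteq$ is immediate; for $\subseteq$, if $x\in X_\chi^q(\alpha)$ then $\limsup_{r\to 0}\frac{\langle q,\chi(x,r)\rangle}{\log r}\leq\langle q,\alpha\rangle<\langle q,\alpha\rangle+\eta$, so $\frac{\langle q,\chi(x,r)\rangle}{\log r}<\langle q,\alpha\rangle+\eta$ for all $r$ smaller than some $1/p$, which, multiplied by $\log r<0$, is exactly the inequality $r^{\langle q,\alpha\rangle+\eta}\leq e^{\langle q,\chi(x,r)\rangle}$ defining $X_{\langle q,\alpha\rangle}(\eta,p)$. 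As this is a countable union, $\sigma$-stability of the packing dimension \cite{C.T} gives $\Dim(X_\chi^q(\alpha))=\sup_{p\geq 1}\Dim(X_{\langle q,\alpha\rangle}(\eta,p))$.

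Next I would re-run the proof of \thmref{thIv} with $A^{\langle q,\alpha\rangle}$ replaced by $X_{\langle q,\alpha\rangle}(\eta,p)$. This set obeys a characteristic inequality of exactly the same shape, $r^{\langle q,\alpha\rangle+\eta}\leq e^{\langle q,\chi(x,r)\rangle}$ for $r<1/p$, so the whole chain of estimates carries over verbatim once $\langle q,\alpha\rangle$ is replaced by $\langle q,\alpha\rangle+\eta$ and $r_0$ by $1/p$; this produces the exponent $t(\langle q,\alpha\rangle+\eta)$ in the relevant packing pre-measures and hence the function $\Phi_{q,\eta}(t)=\inf\{\gamma>0;\ t(\langle q,\alpha\rangle+\eta)>B_\chi((\gamma-t)q)\}$ in place of $\Phi_q(t)$. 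The one point where $T_\chi^q(\alpha,\eta,p)$ is genuinely used is the step that, after writing $X_{\langle q,\alpha\rangle}(\eta,p)=\bigcup_{m}A_m$ with each $\overline{P}_\chi^{(\gamma-t)q,\,t(\langle q,\alpha\rangle+\eta)}(A_m)$ finite, requires $\lambda>L^q(A_m)$ for all $m$: choosing $\lambda>T_\chi^q(\alpha,\eta,p)=\sup_{A\subseteq X_{\langle q,\alpha\rangle}(\eta,p)}L^q(A)$ makes this automatic since $A_m\subseteq X_{\langle q,\alpha\rangle}(\eta,p)$. The conclusion is $\Dim(X_{\langle q,\alpha\rangle}(\eta,p))\leq\Phi_{q,\eta}(t)\,T_\chi^q(\alpha,\eta,p)$ for every $t<0$.

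It then remains to take three limits. Taking $\sup_{p}$ and pulling out the (nonnegative, $p$-independent) constant $\Phi_{q,\eta}(t)$ gives $\Dim(X_\chi^q(\alpha))\leq\Phi_{q,\eta}(t)\,T_\chi^q(\alpha,\eta)$, because $p\mapsto T_\chi^q(\alpha,\eta,p)$ is nondecreasing (the sets $X_{\langle q,\alpha\rangle}(\eta,p)$ grow with $p$), so its supremum equals its limit. Letting $\eta\to 0^+$, both factors decrease: $T_\chi^q(\alpha,\eta)\downarrow T_\chi^q(\alpha)$ by definition, and $\Phi_{q,\eta}(t)\downarrow\Phi_q(t)$ because, $t$ being negative, the thresholds $t(\langle q,\alpha\rangle+\eta)$ increase to $t\langle q,\alpha\rangle$, whence $\bigcup_{\eta>0}\{\gamma>0;\,t(\langle q,\alpha\rangle+\eta)>B_\chi((\gamma-t)q)\}=\{\gamma>0;\,t\langle q,\alpha\rangle>B_\chi((\gamma-t)q)\}$ and the corresponding infima coincide; hence the product tends to $\Phi_q(t)\,T_\chi^q(\alpha)$. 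Thus $\Dim(X_\chi(\alpha,E))\leq\Phi_q(t)\,T_\chi^q(\alpha)$ for all $t<0$; taking $\inf_{t<0}$ and factoring out $T_\chi^q(\alpha)\geq 0$ gives $\Dim(X_\chi(\alpha,E))\leq\Phi_q\,T_\chi^q(\alpha)$, and finally $\inf_{q\in E}$ yields the claimed bound.

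The only conceptual ingredient is the covering $X_\chi^q(\alpha)=\bigcup_{p}X_{\langle q,\alpha\rangle}(\eta,p)$ together with $\sigma$-stability of $\Dim$; the rest is bookkeeping. The main things to watch are the monotonicity of the auxiliary quantities in $p$ and $\eta$ (so that limits may be treated as suprema/infima and pushed through products and infima) and the verification that the proof of \thmref{thIv} is insensitive to the shift $\langle q,\alpha\rangle\mapsto\langle q,\alpha\rangle+\eta$. I do not expect a serious obstacle, provided one stays in the non-degenerate regime where the quantities involved are finite (and $T_\chi^q(\alpha)\geq 0$), outside of which the inequality is vacuous.
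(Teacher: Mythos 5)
Your proof follows essentially the same route as the paper: decompose $X_{\chi }^{q}(\alpha )$ as $\bigcup_{p}X_{\left\langle q,\alpha \right\rangle }(\eta ,p)$, bound each piece by (the argument of) Theorem \ref{thIv}, pass to the limits $p\rightarrow +\infty $ and $\eta \rightarrow 0^{+}$, take $\inf_{t<0}$ and then $\inf_{q\in E}$, and conclude via $X_{\chi }\left( \alpha ,E\right) \subset X_{\chi }^{q}\left( \alpha \right) $. Your extra care in re-running Theorem \ref{thIv} with the shifted constant $\Phi _{q,\eta }(t)$ and proving $\Phi _{q,\eta }(t)\rightarrow \Phi _{q}(t)$ as $\eta \rightarrow 0^{+}$ is in fact a small improvement on the paper, which applies Theorem \ref{thIv} to $X_{\left\langle q,\alpha \right\rangle }(\eta ,p)$ (whose characteristic exponent is $\left\langle q,\alpha \right\rangle +\eta $) while writing the unshifted $\Phi _{q}(t)$ without comment.
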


\begin{proof}
Let $q\in E$ and suppose that $X_{\chi }^{q}\left( \alpha \right) \neq
\varnothing $. From the theorem \ref{thIv} it comes that for all $\eta >0,$%
\begin{equation*}
\Dim(X_{\left\langle q,\alpha \right\rangle }(\eta ,p))\leq \Phi
_{q}(t)L^{q}(X_{\left\langle q,\alpha \right\rangle }(\eta ,p)).
\end{equation*}%
Thus 
\begin{equation*}
\Dim(X_{\left\langle q,\alpha \right\rangle }(\eta ,p))\leq \Phi
_{q}(t)T_{\chi }^{q}(\alpha ,\eta ,p).
\end{equation*}%
Then for all $\eta >0,$ 
\begin{equation*}
\Dim(\underset{p}{\cup }X_{\left\langle q,\alpha \right\rangle }(\eta
,p))\leq \Phi _{q}(t)T_{\chi }^{q}(\alpha ,\eta ).
\end{equation*}%
We remark that for all $\eta >0,$%
\begin{equation*}
X_{\chi }^{q}\left( \alpha \right) \subset \underset{p}{\cup }%
X_{\left\langle q,\alpha \right\rangle }\left( \eta ,p\right) .
\end{equation*}%
It results that for all $\eta >0,$ 
\begin{equation*}
\Dim(X_{\chi }^{q}\left( \alpha \right) )\leq \Phi _{q}(t)T_{\chi
}^{q}(\alpha ,\eta ).
\end{equation*}%
Letting $\eta \rightarrow 0$, we obtain that 
\begin{equation*}
\Dim(X_{\chi }^{q}\left( \alpha \right) )\leq \Phi _{q}(t)T_{\chi
}^{q}(\alpha ).
\end{equation*}%
So it comes that 
\begin{equation*}
\Dim(X_{\chi }^{q}\left( \alpha \right) )\leq \underset{t<0}{\inf }\left(
\Phi _{q}(t)\right) T_{\chi }^{q}(\alpha ).
\end{equation*}%
It is clear that $X_{\chi }\left( \alpha ,E\right) =\underset{q\in E}{\cap }%
X_{\chi }^{q}\left( \alpha \right) .$ Then for all $q\in E$%
\begin{equation*}
\Dim(X_{\chi }\left( \alpha ,E\right) )\leq \Phi _{q}T_{\chi }^{q}(\alpha ).
\end{equation*}%
Finally it follows that 
\begin{equation*}
\Dim(X_{\chi }\left( \alpha ,E\right) )\leq \underset{q\in E}{\inf }\left\{
\Phi _{q}T_{\chi }^{q}(\alpha )\right\} .
\end{equation*}
\end{proof}

We have just established an other upper bounding for $\Dim(X_{\chi }\left(
\alpha ,E\right) )$ which is in some cases thinner than that established by
J. Peyrière in \cite{J.P2} as shown in the example below.

\section{Example}

To build the example, we first define the metric space $\mathbb{X}$ and then
choose the function $\chi $.

We denote by $\mathcal{A}$ the set $\left\{ 0,1\right\} $ and by $\mathcal{A}%
^{n}$ all words of length $n$ constructed with $\mathcal{A}$ as alphabet.
The empty word is denoted by $\epsilon $. For all $j\in \mathcal{A}^{n}$, we
set $N_{0}(j)$ the number of occurrence of the letter $0$ in the word $j$.

Let $j$ and $j^{\prime }$ two words, we denote by $jj^{\prime }$ the
concatenation of $j$ and $j^{\prime }$.

We denote by $\mathbb{X}$ the symbolic space $\left\{ 0,1\right\} ^{\mathbb{N%
}}$, ie the set of sequences $(x_{i})_{i\geq 0}$ of elements of $\{0,1\}$.
Is defined in the same way the concatenation of a finite word and an
infinite word.

If $x=(x_{i})_{i\geq 0}$, $y=(y_{i})_{i\geq 0}\in \mathbb{X}$, we set 
\begin{equation*}
d(x,y)=\left\{ 
\begin{array}{l}
0,\text{ \ \ \ if }x=y, \\ 
2^{-n},\text{ if }x_{n}\neq y_{n}\text{ and }x_{i}=y_{i}\text{ for all }%
0\leq i<n.%
\end{array}%
\right.
\end{equation*}%
If $j=j_{0}j_{1}...j_{n-1}\in \mathcal{A}^{n}$, we set the cylinder 
\begin{equation*}
\left[ j\right] =\left[ j_{0}j_{1}...j_{n-1}\right] =\left\{ jx,\text{ }x\in 
\mathbb{X}\right\} .
\end{equation*}%
It is clear that if $x=(x_{i})_{i\geq 0}\in \mathbb{X}$ and $2^{-n-1}\leq
r<2^{-n},$ then 
\begin{equation*}
B(x,r)=\left[ x_{0}x_{1}...x_{n}\right] .
\end{equation*}

Let $\mathcal{L}$ be a family of cylinders, any element $\left[ j\right] $
of $\mathcal{L}$ is called selected cylinder.

Let $0<p_{0}\leq p_{1}$ such that $p_{0}+p_{1}=1$.

We associate the measure $\mu $ on $\mathbb{X}$ such that for any cylinder $%
\left[ j\right] $ and $l\in \left\{ 0,1\right\} $, 
\begin{equation*}
\mu \left( \left[ jl\right] \right) =\left\{ 
\begin{array}{l}
p_{l}\text{ }\mu \left( \left[ j\right] \right) \text{, if }\left[ j\right] 
\text{ contains a selected cylinder,} \\ 
\dfrac{\mu \left( \left[ j\right] \right) }{2}\text{, otherwise.}%
\end{array}%
\right.
\end{equation*}

For the construction of the example we choose the part $\mathcal{L}$ as
follows.

Let $\beta _{1},$ $\beta _{2},$ $\gamma _{1}$ and $\gamma _{2}$ be real
numbers such that 
\begin{equation*}
\frac{1}{2}<\beta _{1}<\gamma _{1}<\beta _{2}<\gamma _{2}<\frac{1}{3}.
\end{equation*}%
We say that the cylinder $\left[ j\right] $ such that $j\in \mathcal{A}^{n}$
is of 
\begin{equation*}
\begin{array}{c}
\text{type }T_{1},\text{ if \ }\beta _{1}<\dfrac{N_{0}(j)}{n}<\gamma _{1},
\\ 
\text{type }T_{2},\text{ if \ }\beta _{2}<\dfrac{N_{0}(j)}{n}<\gamma _{2}.%
\end{array}%
\end{equation*}%
Let $j\in \mathcal{A}^{n}$ such that $\left[ j\right] $ is of type $1$
(respectively of type $2$), put $\widetilde{\left[ j\right] }$ the set of
the cylinders $\left[ j^{\prime }\right] $, $j^{\prime }\in \mathcal{A}%
^{n+6},$ contained in $\left[ j\right] $ and of the same type than $\left[ j%
\right] $.\newline
Let $n_{0}\in \mathbb{N}$ be a multiple of $6$ and $\left( n_{p}\right) $
the sequence of integers defined by 
\begin{equation*}
n_{0},\text{ }n_{3i+1}=2^{n_{3i}}n_{0},\text{ }n_{3i+2}=2n_{3i+1}\text{ and }%
n_{3i+3}=2n_{3i+2}.
\end{equation*}%
For $k\in \mathbb{N}$ we construct the family $\mathcal{G}_{k}$ of disjoint
cylinders $\left[ j\right] ,$ $j\in \mathcal{A}^{n_{0}+6k}$ such that :%
\newline
$\circ $ any element $\left[ j\right] $ of $\mathcal{G}_{k}$ such that $j\in 
\mathcal{A}^{n}$ satisfies the relation 
\begin{equation*}
\beta _{1}<\frac{N_{0}(j)}{n}<\gamma _{2},
\end{equation*}%
\medskip $\circ $ $\mathcal{G}_{0}$ contains two cylinders $\left[ j^{1}%
\right] $ and $\left[ j^{2}\right] $ respectively of type $T_{1}$ and $T_{2}$%
,\newline
$\circ $ any element of $\mathcal{G}_{k+1}$ is contained in an element of $%
\mathcal{G}_{k}$ called his father,\newline
$\circ $ all elements of $\mathcal{G}_{k}$ beget the same number of son in $%
\mathcal{G}_{k+1}$, and from the generation $\mathcal{G}_{k}$ to generation $%
\mathcal{G}_{k+1}$ we distinguish the following three cases: \newline
\textbf{1}$^{\text{\textbf{st}}}$\textbf{\ case:} If $n_{3i}\leq
n_{0}+6k<n_{3i+1},$ then for all $\left[ j\right] \in \mathcal{G}_{k}$ we
select two cylinders in $\widetilde{\left[ j\right] }.$ Then $\mathcal{G}%
_{k+1}$ is the union of all these selected cylinders. \newline
\textbf{2}$^{\text{\textbf{nd}}}$\textbf{\ case:} If $n_{3i+1}\leq
n_{0}+6k<n_{3i+2},$ then for all $\left[ j\right] $ $\in \mathcal{G}_{k}$ of
type $T_{1}$, we select a cylinder in $\widetilde{\left[ j\right] }$ and for
all $\left[ j\right] \in \mathcal{G}_{k}$ of type $T_{2}$ we select a
cylinder $\left[ j^{\prime }\right] $, $j^{\prime }\in \mathcal{A}%
^{n_{0}+6(k+1)}$ containing a cylinder selected in $\mathcal{G}_{n_{3i+2}}$
of type $T_{1}$. Then $\mathcal{G}_{k+1}$ is the union of all these selected
cylinders.\newline
Note that all cylinders in $\mathcal{G}_{n_{3i+2}}$ are of type $T_{1}$.%
\newline
\textbf{3}$^{\text{\textbf{rd}}}$\textbf{\ case:} If $n_{3i+2}\leq
n_{0}+6k<n_{3i+3},$ then for all $\left[ j\right] $ $\in \mathcal{G}_{k}$
having an ancestor in $\mathcal{G}_{n_{3i+1}}$ of type $T_{1}$, we select a
cylinder $\left[ j^{\prime }\right] $, $j^{\prime }\in \mathcal{A}%
^{n_{0}+6(k+1)}$, containing a selected cylinder in $\mathcal{G}_{n_{3i+3}}$
of type $T_{2},$and for all $\left[ j\right] $ $\in \mathcal{G}_{k}$ of type 
$T_{1}$ we select a cylinder in $\widetilde{\left[ j\right] }$. Then $%
\mathcal{G}_{k+1}$ is the union of all these selected cylinders.

for $n_{0}$ large enough, this construction is possible and we can impose
the following separation condition :

For all $\left[ j\right] ,$ $\left[ j^{\prime }\right] \in \mathcal{G}_{k}$
such that $j,$ $j^{\prime }\in \mathcal{A}^{n}$, for all $k\geq 0,$ the
distance between $\left[ j\right] $ and $\left[ j^{\prime }\right] $ is
larger than $\dfrac{1}{2^{n-2}}$ and for all $k\geq 1$, the distance between 
$\left[ j\right] $ and an element of his father is larger than $\dfrac{1}{%
2^{n-1}}$.

We choose $\mathcal{L}=\left( \underset{k\geq 0}{\cup }\mathcal{G}%
_{k}\right) $ and we associate the following relation on $\mathcal{L}$ :

the two elements of $\mathcal{G}_{0}$ are related and two element of $%
\mathcal{G}_{k+1}$ are related if their fathers elements of $\mathcal{G}_{k}$%
, are related.

Now put 
\begin{equation*}
E=\left\{ q=(q_{1},q_{2})\in \mathbb{R}^{2};\text{ }q_{1}+q_{2}\geq
0\right\} .
\end{equation*}

The function $\chi :\mathbb{X\times }\left] 0,1\right] \rightarrow \mathbb{E}%
^{\prime }$ is defined such that for all $q=(q_{1},q_{2})\in \mathbb{R}^{2}$
and for all $\lambda >0,$ there exists $r_{0}>0$ such that for $x\in \mathbb{%
X}$ and $r<r_{0}$ 
\begin{equation*}
r^{\lambda }\mu (B(x,r))^{(q_{1}+q_{2})}\leq e^{\left\langle q,\chi
(x,r)\right\rangle }\leq r^{-\lambda }\mu (B(x,r))^{(q_{1}+q_{2})}.
\end{equation*}%
Let $a>0$ , for all $q=(q_{1},q_{2})\in \mathbb{R}^{2}$ we set 
\begin{equation*}
\left\langle q,\alpha \right\rangle =a(q_{1}+q_{2}).
\end{equation*}%
We denote 
\begin{equation*}
\overline{X}^{a}=\left\{ x\in \mathbb{X},\text{\thinspace }\underset{%
r\rightarrow 0}{\lim \sup }\frac{\log \mu (B(x,r))}{\log r}\leq a\right\} .
\end{equation*}

\begin{proposition}
\label{thv1sym1} 
\begin{equation*}
X_{\chi }\left( \alpha ,E\right) =\overline{X}^{a}.
\end{equation*}
\end{proposition}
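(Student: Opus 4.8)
The plan is to prove the set equality $X_{\chi}(\alpha,E)=\overline{X}^{a}$ by double inclusion, exploiting the squeezing inequality
\[
r^{\lambda}\mu(B(x,r))^{q_{1}+q_{2}}\leq e^{\langle q,\chi(x,r)\rangle}\leq r^{-\lambda}\mu(B(x,r))^{q_{1}+q_{2}}
\]
which holds for $r<r_{0}(\lambda)$, together with $\langle q,\alpha\rangle=a(q_{1}+q_{2})$. Throughout, the key observation is that for $r<1$ we have $\log r<0$, so when we divide the logarithm of the above by $\log r$ the inequalities flip, and one gets
\[
(q_{1}+q_{2})\frac{\log\mu(B(x,r))}{\log r}-\lambda\leq\frac{\langle q,\chi(x,r)\rangle}{\log r}\leq(q_{1}+q_{2})\frac{\log\mu(B(x,r))}{\log r}+\lambda.
\]
Taking $\limsup_{r\to0}$ and then letting $\lambda\to0$ will be the engine of both inclusions.

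First I would prove $\overline{X}^{a}\subset X_{\chi}(\alpha,E)$. Fix $x\in\overline{X}^{a}$ and $q=(q_{1},q_{2})\in E$, so $q_{1}+q_{2}\geq0$. From the right-hand bound above, $\frac{\langle q,\chi(x,r)\rangle}{\log r}\leq(q_{1}+q_{2})\frac{\log\mu(B(x,r))}{\log r}+\lambda$; taking $\limsup_{r\to0}$ and using that $q_{1}+q_{2}\geq 0$ (so multiplication by $q_{1}+q_{2}$ preserves the $\limsup$ inequality) gives $\limsup_{r\to0}\frac{\langle q,\chi(x,r)\rangle}{\log r}\leq(q_{1}+q_{2})\,a+\lambda=\langle q,\alpha\rangle+\lambda$. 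Since $\lambda>0$ is arbitrary, $\limsup_{r\to0}\frac{\langle q,\chi(x,r)\rangle}{\log r}\leq\langle q,\alpha\rangle$, and as $q\in E$ was arbitrary, $x\in X_{\chi}(\alpha,E)$. The mild subtlety here is that $r_{0}$ depends on $\lambda$ (and a priori on $q$), but since we only need the inequality for all sufficiently small $r$, this causes no problem when taking the $\limsup$.

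Next I would prove $X_{\chi}(\alpha,E)\subset\overline{X}^{a}$. Fix $x\in X_{\chi}(\alpha,E)$. Here one must produce the bound $\limsup_{r\to0}\frac{\log\mu(B(x,r))}{\log r}\leq a$ using only information about $\langle q,\chi\rangle$. The clean choice is to test with $q=(1,0)$, which lies in $E$ since $1+0=1\geq0$; then $q_{1}+q_{2}=1$ and $\langle q,\alpha\rangle=a$. The left-hand bound above with $\lambda$ gives $\frac{\langle q,\chi(x,r)\rangle}{\log r}\geq\frac{\log\mu(B(x,r))}{\log r}-\lambda$, hence $\frac{\log\mu(B(x,r))}{\log r}\leq\frac{\langle q,\chi(x,r)\rangle}{\log r}+\lambda$; taking $\limsup_{r\to0}$ and using $x\in X_{\chi}(\alpha,E)$ for this particular $q$ yields $\limsup_{r\to0}\frac{\log\mu(B(x,r))}{\log r}\leq\langle q,\alpha\rangle+\lambda=a+\lambda$, and letting $\lambda\to0$ finishes it, so $x\in\overline{X}^{a}$.

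The main obstacle — really the only point requiring care — is the interaction of signs: because $\log r<0$ near $0$, all the inequalities coming from the definition of $\chi$ must be divided by $\log r$ with the inequality reversed, and because $E$ only guarantees $q_{1}+q_{2}\geq0$ (not strict positivity or a fixed sign of each coordinate), one has to be sure that passing the factor $q_{1}+q_{2}$ through a $\limsup$ is legitimate, which it is precisely because $q_{1}+q_{2}\geq0$. For the second inclusion one sidesteps this entirely by choosing the single test point $q=(1,0)\in E$. A secondary bookkeeping point is that the threshold $r_{0}$ in the definition of $\chi$ depends on $\lambda$; since every estimate is taken in the limit $r\to0$ followed by $\lambda\to0$, this dependence is harmless, but it should be stated explicitly so the order of quantifiers is transparent.
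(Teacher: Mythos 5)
Your proof is correct and follows essentially the same route as the paper: both rest on the squeeze $r^{\lambda}\mu(B(x,r))^{q_{1}+q_{2}}\leq e^{\langle q,\chi(x,r)\rangle}\leq r^{-\lambda}\mu(B(x,r))^{q_{1}+q_{2}}$, divided by $\log r<0$, together with $q_{1}+q_{2}\geq 0$ and $\lambda\to 0$. You merely make explicit (via the test point $q=(1,0)$ and the two inclusions) the step the paper compresses into ``it is easy to obtain,'' which is a welcome clarification but not a different argument.
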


\begin{proof}
From the inequalities 
\begin{equation*}
r^{\lambda }\mu (B(x,r))^{(q_{1}+q_{2})}\leq e^{\left\langle q,\chi
(x,r)\right\rangle }\leq r^{-\lambda }\mu (B(x,r))^{(q_{1}+q_{2})}
\end{equation*}%
we deduce that 
\begin{equation*}
-\lambda +(q_{1}+q_{2})\frac{\log (\mu (B(x,r))}{\log r}\leq \frac{%
\left\langle q,\chi (x,r)\right\rangle }{\log r}\leq \lambda +(q_{1}+q_{2})%
\frac{\log (\mu (B(x,r))}{\log r}.
\end{equation*}%
It follows that 
\begin{equation*}
X_{\chi }\left( \alpha ,E\right) =\left\{ x\in \mathbb{X},\text{\thinspace }%
(q_{1}+q_{2})\underset{r\rightarrow 0}{\lim \sup }\frac{\log \mu (B(x,r))}{%
\log r}\leq a(q_{1}+q_{2})\right\} .
\end{equation*}%
Let us recall that for all $q=(q_{1},q_{2})\in E;$ $q_{1}+q_{2}\geq 0$, then
it is easy to obtain that%
\begin{equation*}
X_{\chi }\left( \alpha ,E\right) =\left\{ x\in \mathbb{X},\text{\thinspace }%
\underset{r\rightarrow 0}{\lim \sup }\frac{\log \mu (B(x,r))}{\log r}\leq
a\right\}
\end{equation*}%
or 
\begin{equation*}
X_{\chi }\left( \alpha ,E\right) =\overline{X}^{a}.
\end{equation*}
\end{proof}

We write for all real number $\theta $, 
\begin{equation*}
\Lambda _{\mu }(\theta )=\Delta _{\mu }^{\theta }(\supp\mu )\text{ \ and \ }%
B_{\mu }(\theta )=\Dim_{\mu }^{\theta }(\supp\mu ),
\end{equation*}%
Let us recall that it is already established in \cite{O.L1} that 
\begin{equation*}
B_{\mu }\leq \Lambda _{\mu }
\end{equation*}%
\begin{equation*}
B_{\mu }(1)=\Lambda _{\mu }(1)=0
\end{equation*}%
and that the functions $\Lambda _{\mu }:\theta \mapsto \Lambda _{\mu
}(\theta )$ and $B_{\mu }:\theta \mapsto B_{\mu }(\theta )$ are convex and
decreasing.

\begin{proposition}
For $q=(q_{1},q_{2})\in \mathbb{R}^{2},$ 
\begin{equation*}
\Lambda _{\chi }(q)=\Lambda _{\mu }(q_{1}+q_{2})
\end{equation*}%
and%
\begin{equation*}
B_{\chi }(q)=B_{\mu }(q_{1}+q_{2}).
\end{equation*}
\end{proposition}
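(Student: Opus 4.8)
The plan is to derive the identities for $\Lambda_\chi$ and $B_\chi$ directly from the two-sided estimate
\[
r^\lambda \mu(B(x,r))^{q_1+q_2}\le e^{\langle q,\chi(x,r)\rangle}\le r^{-\lambda}\mu(B(x,r))^{q_1+q_2},
\]
which holds for every prescribed $\lambda>0$ once $r$ is small enough. Feeding this into the definition of $\overline P_{\chi,\varepsilon}^{q,t}$, a centered $\varepsilon$-packing $(B(x_i,r_i))_i$ contributes $\sum_i r_i^t e^{\langle q,\chi(x_i,r_i)\rangle}$, which is squeezed between $\sum_i r_i^{t+\lambda}\mu(B(x_i,r_i))^{q_1+q_2}$ and $\sum_i r_i^{t-\lambda}\mu(B(x_i,r_i))^{q_1+q_2}$. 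Taking suprema over packings and letting $\varepsilon\to0$, this shows
\[
\overline P_\mu^{\,q_1+q_2,\,t+\lambda}(A)\le \overline P_\chi^{q,t}(A)\le \overline P_\mu^{\,q_1+q_2,\,t-\lambda}(A)
\]
for the Olsen prepacking premeasure $\overline P_\mu$ attached to $\mu$ with parameter $\theta=q_1+q_2$ (this is exactly the identification of $\chi$ with Olsen's formalism recalled in the introduction, with the scale factor $r^{\pm\lambda}$ as a perturbation). Here one has to be a little careful: the estimate only holds for $r<r_0(\lambda)$, so strictly one works with the $\varepsilon$-level quantities for $\varepsilon<r_0$, where every ball of the packing already has radius $<r_0$, and then passes to the limit $\varepsilon\to0$; this is the only mildly technical point.

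Next I would pass from the prepacking premeasure $\overline P$ to the packing measure $P$ and hence to the dimensions. Since the inequality $\overline P_\mu^{\,q_1+q_2,\,t+\lambda}\le \overline P_\chi^{q,t}\le \overline P_\mu^{\,q_1+q_2,\,t-\lambda}$ holds for every subset simultaneously, it is inherited by the countable-infimum construction: covering $A=\bigcup_i A_i$ and summing gives
\[
P_\mu^{\,q_1+q_2,\,t+\lambda}(A)\le P_\chi^{q,t}(A)\le P_\mu^{\,q_1+q_2,\,t-\lambda}(A).
\]
Now apply this with $A=\mathbb X=\{0,1\}^{\mathbb N}$, recalling that $\supp\mu=\mathbb X$ here (every cylinder has positive $\mu$-mass), and read off the critical exponents. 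From the left inequality, if $t+\lambda>\Lambda_\mu(q_1+q_2)$ then $\overline P_\chi^{q,t}(\mathbb X)<\infty$... actually more directly $=0$ when $t+\lambda>\Lambda_\mu(q_1+q_2)$, forcing $\Lambda_\chi(q)\le t$; letting $t\downarrow \Lambda_\mu(q_1+q_2)-\lambda$ and then $\lambda\downarrow0$ gives $\Lambda_\chi(q)\le \Lambda_\mu(q_1+q_2)$. From the right inequality, if $t-\lambda<\Lambda_\mu(q_1+q_2)$ then $\overline P_\chi^{q,t}(\mathbb X)=+\infty$, so $\Lambda_\chi(q)\ge \Lambda_\mu(q_1+q_2)$; hence equality. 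The identical argument with $P$ in place of $\overline P$ gives $B_\chi(q)=B_\mu(q_1+q_2)$.

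The main obstacle, such as it is, is bookkeeping the quantifier on $\lambda$: the hypothesis on $\chi$ gives, for each $\lambda$, a threshold $r_0=r_0(\lambda)$ below which the two-sided bound is valid, and one must make sure the limits $\varepsilon\to0$ (in the definition of $\overline P_\chi^{q,t}$ and $\overline P_\mu^{\theta,t}$) and the limit $\lambda\to0$ are taken in the right order so that the perturbation $r^{\pm\lambda}$ genuinely disappears. Once one fixes $\lambda$ first, works entirely with scales below $r_0(\lambda)$, obtains the sandwich with a $\pm\lambda$ shift in the $t$-exponent, and only afterwards lets $\lambda\to0$, everything goes through; the scale shift affects neither $\Lambda_\mu$ nor $B_\mu$ in the limit because these are defined by a single critical value of $t$. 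I would also remark that convexity and monotonicity of $\Lambda_\mu,B_\mu$ in $\theta$ (recalled above) are not needed for the proof but make the resulting formulas for $\Lambda_\chi,B_\chi$ transparent as functions of $q$ through the single scalar $q_1+q_2$.
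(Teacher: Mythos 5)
Your proof follows essentially the same route as the paper: squeeze $\overline P_{\chi,\varepsilon}^{q,t}$ between the Olsen premeasures $\overline P_{\mu,\varepsilon}^{\,q_1+q_2,\,t\pm\lambda}$ via the defining estimate on $\chi$ (working at scales below $r_0(\lambda)$), let $\varepsilon\to 0$, pass to the covering infimum, and read off the critical exponents before letting $\lambda\to 0$; your observation that the sandwich holds for every subset, which is what the packing-measure construction and hence $B_\chi$ actually require, is a useful precision the paper leaves implicit. One bookkeeping slip: the upper bound $\Lambda_\chi(q)\le\Lambda_\mu(q_1+q_2)+\lambda$ comes from the \emph{right-hand} inequality $\overline P_\chi^{q,t}\le\overline P_\mu^{\,q_1+q_2,\,t-\lambda}$ (taking $t-\lambda>\Lambda_\mu(q_1+q_2)$, so the majorant vanishes), and the lower bound from the left-hand one with $t+\lambda<\Lambda_\mu(q_1+q_2)$ — you have the two sides interchanged, which as written asserts implications that do not follow, but swapping them restores the argument and the $\pm\lambda$ shift disappears in the limit $\lambda\to 0$ exactly as you intend.
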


\begin{proof}
It suffices to note that for $t\in \mathbb{R},$ $q=(q_{1},q_{2})\in \mathbb{R%
}^{2}$ and for $\lambda >0,$ there exists $r_{0}>0$ such that for $%
r_{i}<r_{0}$ and $(B\left( x_{i},r_{i}\right) )_{i\in I}$ a centered $%
\varepsilon -$packing of $\mathbb{X}$ with $\varepsilon \leq r_{0},$ 
\begin{equation*}
\underset{i}{\sum }r_{i}^{t+\lambda }\mu
(B(x_{i},r_{i}))^{(q_{1}+q_{2})}\leq \underset{i}{\sum }r_{i}^{t}e^{\left%
\langle q,\chi (x_{i},r_{i})\right\rangle }\leq \underset{i}{\sum }%
r_{i}^{t-\lambda }\mu (B(x_{i},r_{i}))^{(q_{1}+q_{2})}.
\end{equation*}%
Then%
\begin{equation*}
\overline{P}_{\mu ,\varepsilon }^{(q_{1}+q_{2}),t+\lambda }(\mathbb{X})\leq 
\overline{P}_{\chi ,\varepsilon }^{q,t}(\mathbb{X})\leq \overline{P}_{\mu
,\varepsilon }^{(q_{1}+q_{2}),t-\lambda }(\mathbb{X}),
\end{equation*}%
letting $\varepsilon \rightarrow 0$, it comes that%
\begin{equation*}
\overline{P}_{\mu }^{(q_{1}+q_{2}),t+\lambda }(\mathbb{X})\leq \overline{P}%
_{\chi }^{q,t}(\mathbb{X})\leq \overline{P}_{\mu }^{(q_{1}+q_{2}),t-\lambda
}(\mathbb{X}),
\end{equation*}%
and letting $\lambda \rightarrow 0$, we obtain the equality 
\begin{equation*}
\overline{P}_{\chi }^{q,t}(\mathbb{X})=\overline{P}_{\mu }^{(q_{1}+q_{2}),t}(%
\mathbb{X}).
\end{equation*}%
Then it is clear that 
\begin{equation*}
\Lambda _{\chi }(q)=\Lambda _{\mu }(q_{1}+q_{2})
\end{equation*}%
and 
\begin{equation*}
B_{\chi }(q)=B_{\mu }(q_{1}+q_{2}).
\end{equation*}
\end{proof}

\begin{proposition}
\label{thv1sym}%
\begin{equation*}
\underset{q\in E}{\inf }(\left\langle q,\alpha \right\rangle +B_{\chi }(q))=%
\underset{\theta \geq 0}{\inf }(a\theta +B_{\mu }(\theta )).
\end{equation*}
\end{proposition}

\begin{proof}
As $B_{\chi }(q)=B_{\mu }(q_{1}+q_{2})$ it is clear that 
\begin{equation*}
\underset{q\in E}{\inf }(\left\langle q,\alpha \right\rangle +B_{\chi }(q))=%
\underset{q_{1}+q_{2}\geq 0}{\inf }(a(q_{1}+q_{2})+B_{\mu }(q_{1}+q_{2}))
\end{equation*}

or%
\begin{equation*}
\underset{q\in E}{\inf }(\left\langle q,\alpha \right\rangle +B_{\chi }(q))=%
\underset{\theta \geq 0}{\inf }(a\theta +B_{\mu }(\theta )).
\end{equation*}
\end{proof}

We find in the following corollary a theorem obtained by L. Olsen in \cite%
{O.L1}.

\begin{corollary}
\begin{equation*}
\Dim(\overline{X}^{a})\leq \underset{\theta \geq 0}{\inf }(a\theta +B_{\mu
}(\theta )).
\end{equation*}
\end{corollary}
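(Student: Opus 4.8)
The plan is to derive the corollary directly from Theorem~\ref{th1v}, applied to the concrete data $(\mathbb{X},\chi,\alpha,E)$ fixed throughout this section, and then to translate the resulting inequality by means of Propositions~\ref{thv1sym1} and \ref{thv1sym}. Before invoking Theorem~\ref{th1v} I would first check its standing hypothesis, namely that $\mathbb{X}=\{0,1\}^{\mathbb{N}}$ with the metric $d$ satisfies the Besicovitch covering property. This is immediate from the ultrametric structure: for $2^{-n-1}\le r<2^{-n}$ one has $B(x,r)=[x_0x_1\dots x_n]$, so every ball is a cylinder, and two cylinders are either disjoint or one contains the other. Consequently, from any family of balls centred at the points of a set one extracts a pairwise disjoint subfamily still covering that set, so $\mathbb{X}$ has the Besicovitch covering property (with constant $1$).

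Next I would apply Theorem~\ref{th1v} to the function $\chi$ of this section, to $\alpha\in\mathbb{E}'$ with $\langle q,\alpha\rangle=a(q_1+q_2)$, and to $E=\{q=(q_1,q_2)\in\mathbb{R}^2;\ q_1+q_2\ge 0\}\subset\mathbb{E}=\mathbb{R}^2$. This gives
\begin{equation*}
\Dim(X_{\chi}(\alpha,E))\le \underset{q\in E}{\inf}(\langle q,\alpha\rangle+B_{\chi}(q)).
\end{equation*}
By Proposition~\ref{thv1sym1} the set on the left is exactly $\overline{X}^{a}$, while by Proposition~\ref{thv1sym} the infimum on the right equals $\underset{\theta\ge 0}{\inf}(a\theta+B_{\mu}(\theta))$. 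Substituting both identifications yields
\begin{equation*}
\Dim(\overline{X}^{a})\le \underset{\theta\ge 0}{\inf}(a\theta+B_{\mu}(\theta)),
\end{equation*}
which is the claimed bound.

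Since each ingredient has already been established, there is no real obstacle in this argument; the only point that warrants an explicit line is the verification of the Besicovitch covering property, handled above via the ultrametric nature of $\mathbb{X}$. The corollary is worth isolating because it recovers, inside the present vectorial symbolic framework, the upper bound of L.~Olsen \cite{O.L1} for the packing dimension of the level set $\overline{X}^{a}$, thereby showing that Theorem~\ref{th1v} specialises to the scalar measure case.
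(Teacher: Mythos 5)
Your argument is exactly the paper's proof: apply Theorem~\ref{th1v} to the data $(\chi,\alpha,E)$ of this section and then identify the two sides via Propositions~\ref{thv1sym1} and \ref{thv1sym}. The only addition, your explicit check that the ultrametric space $\{0,1\}^{\mathbb{N}}$ satisfies the Besicovitch covering property, is correct and harmless but does not change the route.
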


\begin{proof}
By applying the theorem \ref{th1v} established by J. Peyrière we get that 
\begin{equation*}
\Dim(X_{\chi }\left( \alpha ,E\right) )\leq \underset{q\in E}{\inf }%
(\left\langle q,\alpha \right\rangle +B_{\chi }(q))
\end{equation*}%
which gives using the propositions \ref{thv1sym1} and \ref{thv1sym} that 
\begin{equation*}
\Dim(\overline{X}^{a})\leq \underset{\theta \geq 0}{\inf }(a\theta +B_{\mu
}(\theta )).
\end{equation*}
\end{proof}

\begin{proposition}
\label{prop4sym} 
\begin{equation*}
\underset{\theta \rightarrow +\infty }{\lim }B_{\mu }(\theta )=-\infty .
\end{equation*}
\end{proposition}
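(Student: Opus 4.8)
The plan is to show $\lim_{\theta\to+\infty}B_\mu(\theta)=-\infty$ by exploiting the specific two-scale structure built into the family $\mathcal{L}$, namely the alternation of generations of type $T_1$ and $T_2$ with disparate "measure decay rates." Since $B_\mu(\theta)=\Dim_\mu^\theta(\supp\mu)$, it suffices to produce, for each $\theta$ large, a countable cover of $\supp\mu$ — or even just to estimate $\overline{P}_\mu^{\theta,t}(\supp\mu)$ on a single piece — showing that $P_\mu^{\theta,t}(\supp\mu)=0$ for some $t\to-\infty$ as $\theta\to+\infty$. The mechanism: at a selected cylinder $[j]$ of type $T_i$ contained in $\mathcal A^n$, one has $\mu([j])\approx p_0^{N_0(j)}p_1^{n-N_0(j)}$ up to the non-selected "halving" steps, so $\frac{\log\mu([j])}{\log 2^{-n}}$ lies near $-(\tfrac{N_0(j)}{n}\log_2 p_0+(1-\tfrac{N_0(j)}{n})\log_2 p_1)$, which is bounded below by a constant $c>0$ (since $p_0\le p_1$ forces $\mu([j])\le 2^{-n}$ on selected parts, giving $\tfrac{\log\mu([j])}{-n\log 2}\ge 1$ roughly, but more precisely $\tfrac{\log\mu([j])}{\log 2^{-n}}\ge \kappa$ for some $\kappa>1$ depending only on $p_0,\beta_1,\gamma_2$). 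Meanwhile the cardinality of $\mathcal G_k$ grows only like a fixed branching (two sons, or one son, per case), so the covering sums $\sum_{[j]\in\mathcal G_k}(2^{-n})^t\mu([j])^\theta$ can be forced to zero.

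Concretely, I would first fix $t<0$ and estimate, for a centered packing of $\supp\mu$ at scale $\varepsilon=2^{-n}$ refining to generation $\mathcal G_k$ (with $n=n_0+6k$), the quantity
\begin{equation*}
\sum_{[j]\in\mathcal G_k}(2^{-n})^t\,\mu([j])^\theta \;\le\; \#\mathcal G_k\cdot (2^{-n})^t\cdot\bigl(\max_{[j]\in\mathcal G_k}\mu([j])\bigr)^\theta .
\end{equation*}
Using $\mu([j])\le 2^{-\kappa n}$ on every $[j]\in\mathcal G_k$ (for a uniform $\kappa>1$ that I would extract from the construction, tracking how many of the $n$ steps are "selected" halving-by-$p_\ell$ steps versus plain halving, and using $p_0p_1\le 1/4$ so that each pair of steps contributes at most $2^{-(2+\delta)}$ for some $\delta>0$), and $\#\mathcal G_k\le 2^k\le 2^{n/6}$, the sum is bounded by $2^{n/6}\,2^{-nt}\,2^{-\kappa\theta n}=2^{n(1/6-t-\kappa\theta)}$. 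For any fixed $t$, choosing $\theta$ large enough that $\kappa\theta>1/6-t$ makes this $\to0$ as $n\to\infty$, hence $\overline{P}_\mu^{\theta,t}(\supp\mu)=0$, so $B_\mu(\theta)\le t$. Since $t<0$ was arbitrary and for each $t$ we get all sufficiently large $\theta$ to work, this yields $B_\mu(\theta)\to-\infty$.

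**The main obstacle** is the bookkeeping in the second and third cases of the construction, where a son cylinder $[j']\in\mathcal A^{n_0+6(k+1)}$ is chosen merely to *contain* a deep selected cylinder of the other type rather than lying in $\widetilde{[j]}$; there the proportion $N_0(j')/(n_0+6(k+1))$ is only controlled to lie in $(\beta_1,\gamma_2)$, not in a type band, so I must verify the uniform bound $\mu([j'])\le 2^{-\kappa(n_0+6(k+1))}$ still holds. This should follow because $\beta_1>1/3$ is irrelevant — what matters is that $\beta_1 > 1/2$ is false; rather $1/2<\beta_1$, wait — re-reading, $\tfrac12<\beta_1<\cdots<\gamma_2<\tfrac13$ is impossible as written, so the intended chain is $\tfrac13<\beta_1<\gamma_1<\beta_2<\gamma_2<\tfrac12$, and then $N_0(j)/n\in(\tfrac13,\tfrac12)$ on all of $\mathcal G_k$; combined with $p_0\le p_1$, $p_0+p_1=1$, one gets $-\log_2\mu([j])\ge n\bigl(\beta_1\log_2\tfrac1{p_0}+(1-\gamma_2)\log_2\tfrac1{p_1}\bigr)=:\kappa n$ with $\kappa>1$ provided $p_0$ is bounded away from $1/2$ (i.e. $p_0<p_1$ strictly, which is allowed). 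I would isolate this uniform estimate as a short lemma on $\mu([j])$ for $[j]\in\bigcup_k\mathcal G_k$ before running the covering-sum argument, and then the limit statement drops out immediately.
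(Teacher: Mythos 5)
There is a genuine gap, and it comes from misreading what $\supp\mu$ is. The measure $\mu$ charges \emph{every} cylinder: a non-selected child always receives mass $\mu([j])/2>0$, so $\supp\mu=\mathbb{X}$, not the Cantor-like set $\mathcal{C}=\cap_k\cup_{[j]\in\mathcal{G}_k}[j]$. Since $B_\mu(\theta)=\Dim_\mu^{\theta}(\supp\mu)=\Dim_\mu^{\theta}(\mathbb{X})$, an estimate of $\sum_{[j]\in\mathcal{G}_k}(2^{-n})^{t}\mu([j])^{\theta}$ over the selected generations says nothing about $\overline{P}_\mu^{\theta,t}(\mathbb{X})$ or $P_\mu^{\theta,t}(\mathbb{X})$: centered packings of $\supp\mu$ may use balls centered anywhere in $\mathbb{X}$ and with mixed radii, and there are roughly $2^{n}$ disjoint cylinders of generation $n$, not $\#\mathcal{G}_k$ of them. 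Your auxiliary lemma is also false in the regime the paper actually works in: later in the section the parameters are chosen so that $g(\gamma_2)<1$, and on a selected cylinder $\mu([j])=2^{-n\,g(N_0(j)/n)}\geq 2^{-n\,g(\gamma_2)}>2^{-n}$, so there is no uniform bound $\mu([j])\leq 2^{-\kappa n}$ with $\kappa>1$ on $\cup_k\mathcal{G}_k$ (for the limit you only need some $\kappa>0$, but the claimed mechanism ``$p_0p_1\leq 1/4$, pair the steps'' breaks down because $N_0(j)/n<1/2$ leaves unpaired $p_1$-factors). Finally, even for the set $\mathcal{C}$ alone, bounding the canonical generation-$k$ family is a box-counting estimate; to control $\overline{P}_\mu^{\theta,t}$ one must bound arbitrary centered packings, e.g.\ by grouping balls according to dyadic radius and summing over scales.

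The whole two-type selection structure is in fact irrelevant here, and the repair is exactly the paper's much simpler argument: since each step of the construction multiplies the mass by $p_0$, $p_1$ or $1/2$, all of which lie in $[p_0,p_1]$, one has $p_0^{n}\leq\mu([j])\leq p_1^{n}$ for \emph{every} cylinder of generation $n$. Hence for any centered packing of $\mathbb{X}$, a ball of radius $r_i\in[2^{-n-1},2^{-n})$ contributes $\mu(B(x_i,r_i))^{\theta}r_i^{t}\leq C\,p_1^{n\theta}2^{-nt}$; grouping the balls by dyadic radius and summing over $n$ shows the packing sums stay bounded once $t$ exceeds (up to an additive constant coming from the number of disjoint cylinders per scale) $\theta\log_2 p_1$. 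This yields $\Lambda_\mu(\theta)\leq\theta\frac{\log p_1}{\log 2}+O(1)$, and then $B_\mu(\theta)\leq\Lambda_\mu(\theta)\to-\infty$ as $\theta\to+\infty$, using $B_\mu\leq\Lambda_\mu$. If you want to keep your restriction to $\mathcal{C}$ for intuition, you must still separately handle all points of $\mathbb{X}\setminus\mathcal{C}$ (where $\mu(B(x,r))\asymp r$), and at that point the uniform bound above already does everything.
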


\begin{proof}
We note that for all $\left[ j\right] $ such that $j\in \mathcal{A}^{n}$, 
\begin{equation}
p_{0}^{n}\leq \mu \left( \left[ j\right] \right) \leq p_{1}^{n}.  \label{eq7}
\end{equation}%
Let $(B(x_{i},r_{i}))_{i\in I}$ be a centered $\varepsilon $ -packing of $%
\mathbb{X}$.\newline
For all $i\in I,$ we consider the cylinder $\left[ j\right]
_{i}=B(x_{i},r_{i})$ such that $j\in \mathcal{A}^{n+1}$ and 
\begin{equation}
\frac{1}{2^{n+1}}\leq r_{i}<\frac{1}{2^{n}}.  \label{eq8}
\end{equation}%
Given (\ref{eq7}), we get that 
\begin{equation}
p_{0}^{n}\leq \mu (B(x_{i},r_{i}))\leq p_{1}^{n}.  \label{eq9}
\end{equation}%
From (\ref{eq8}), we deduce that for all $t,$ there exist $c_{1}$ and $c_{2} 
$ such that for all $n\in \mathbb{N}$, 
\begin{equation}
\frac{c_{1}}{2^{nt}}\leq r_{i}{}^{t}\leq \frac{c_{2}}{2^{nt}},  \label{eq10}
\end{equation}%
and from (\ref{eq9}), it comes that for all $\theta >0$, 
\begin{equation}
p_{0}^{n\theta }\leq \mu \left( B\left( x_{i},r_{i}\right) \right) ^{\theta
}\leq p_{1}^{n\theta }.  \label{eq11}
\end{equation}%
Then given (\ref{eq10}) and (\ref{eq11}), there exists $c_{3}$ such that 
\begin{equation}
\mu \left( B\left( x_{i},r_{i}\right) \right) ^{\theta }r_{i}{}^{t}\leq
c_{3}p_{1}^{n\theta }2^{-nt}.  \label{eq12}
\end{equation}%
It follows thanks to (\ref{eq12}), that there exists $C$ which depends only
on $\theta $ and $t$ such that 
\begin{equation}
\underset{\frac{1}{2^{n}}\leq 2r_{i}\leq \frac{1}{2^{n-2}}}{\sum }\mu \left(
B\left( x_{i},r_{i}\right) \right) ^{\theta }r_{i}{}^{t}\leq C(p_{1}^{\theta
}2^{-t})^{n}.  \label{eq13}
\end{equation}%
Writing for $\varepsilon >0$ small enough, 
\begin{equation*}
\underset{i\in I}{\sum }\mu \left( B\left( x_{i},r_{i}\right) \right)
^{\theta }r_{i}{}^{t}=\underset{n\geq 0}{\sum }\underset{\frac{1}{2^{n+1}}%
\leq r_{i}<\frac{1}{2^{n}}}{\sum }\mu \left( B\left( x_{i},r_{i}\right)
\right) ^{\theta }r_{i}{}^{t},
\end{equation*}%
it follows from inequality (\ref{eq13}) that 
\begin{equation*}
\underset{i\in I}{\sum }\mu \left( B\left( x_{i},r_{i}\right) \right)
^{\theta }r_{i}{}^{t}<+\infty ,\text{ }t>\theta \dfrac{\log p_{1}}{\log 2}.
\end{equation*}%
We deduce that%
\begin{equation*}
\Lambda _{\mu }(\theta )\leq \theta \frac{\log p_{1}}{\log 2},\text{ }\theta
>0.
\end{equation*}%
Then 
\begin{equation*}
B_{\mu }(\theta )\leq \theta \frac{\log p_{1}}{\log 2},\theta >0.
\end{equation*}%
Finally we obtain that 
\begin{equation*}
\underset{\theta \rightarrow +\infty }{\lim }B_{\mu }(\theta )=-\infty .
\end{equation*}
\end{proof}

\begin{proposition}
\label{prop5symbis}Put $B_{\mu -}^{^{\prime }}(1)$ the left derivative
number of $B_{\mu }$ at $1$. Then 
\begin{equation*}
B_{\mu -}^{^{\prime }}(1)\leq -1.
\end{equation*}
\end{proposition}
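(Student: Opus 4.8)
The statement $B_{\mu-}'(1)\le -1$ is a slope (secant) estimate on the concave\footnote{Recall $B_\mu$ is convex and decreasing, so we are really looking at the left difference quotient $\bigl(B_\mu(1)-B_\mu(\theta)\bigr)/(1-\theta)$ for $\theta<1$, which increases to $B_{\mu-}'(1)$ as $\theta\uparrow 1$; the claim is that this quantity stays $\le -1$.} function $B_\mu$ near $\theta=1$. Since $B_\mu(1)=0$, the left derivative number is
\[
B_{\mu-}'(1)=\lim_{\theta\uparrow 1}\frac{B_\mu(1)-B_\mu(\theta)}{1-\theta}
=\lim_{\theta\uparrow 1}\frac{-B_\mu(\theta)}{1-\theta},
\]
so it suffices to produce, for each $\theta<1$, an upper bound $B_\mu(\theta)\le (1-\theta)\cdot(-1)=\theta-1$, i.e. $B_\mu(\theta)\le \theta-1$ for all $\theta<1$ (or at least for $\theta$ in a left-neighborhood of $1$). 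The plan is to get this from a direct packing estimate on $\operatorname{supp}\mu$, exactly in the spirit of the proof of Proposition~\ref{prop4sym}: bound $\overline P_\mu^{\theta,t}(\operatorname{supp}\mu)$ (hence $\Lambda_\mu(\theta)$, hence $B_\mu(\theta)$) by summing $r_i^{\,t}\mu(B(x_i,r_i))^\theta$ over a centered $\varepsilon$-packing and showing the sum is finite as soon as $t>\theta-1$.

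\textbf{Key steps.} First, fix a centered $\varepsilon$-packing $(B(x_i,r_i))_{i\in I}$ of $\operatorname{supp}\mu$ and group the balls by scale: for $2^{-n-1}\le r_i<2^{-n}$ one has $B(x_i,r_i)=[x_0\cdots x_n]$, a cylinder of length $n+1$, and by the packing (disjointness) condition the cylinders occurring at a fixed length $n+1$ are pairwise disjoint, so $\sum_{\text{length }n+1}\mu([x_0\cdots x_n])\le 1$. Second — and this is the one new ingredient beyond Proposition~\ref{prop4sym} — use $\theta<1$ together with the normalization $\sum_j\mu([j])=1$ over cylinders $[j]$ of fixed length to control $\sum_j \mu([j])^\theta$. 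By Hölder/Jensen with exponent $1/\theta>1$ applied to the (at most $2^{n+1}$) disjoint cylinders of length $n+1$,
\[
\sum_{\text{length }n+1}\mu([j])^\theta\le \Bigl(2^{n+1}\Bigr)^{1-\theta}\Bigl(\sum_{\text{length }n+1}\mu([j])\Bigr)^{\theta}\le 2^{(n+1)(1-\theta)}.
\]
Third, combine with the scale bound $r_i^{\,t}\le c\,2^{-nt}$ (for $t$ of either sign, as in $(\ref{eq10})$) to get
\[
\sum_{2^{-n-1}\le r_i<2^{-n}} r_i^{\,t}\mu(B(x_i,r_i))^\theta\le c\,2^{-nt}\,2^{(n+1)(1-\theta)}=c'\bigl(2^{-(t-(1-\theta))}\bigr)^{n}.
\]
Summing over $n\ge 0$ gives a finite geometric series precisely when $t>1-\theta$; wait — I must be careful with the direction of the inequality defining $B_\mu$: $\overline P_\mu^{\theta,t}<\infty$ forces $t\ge\Lambda_\mu(\theta)$, so we obtain $\Lambda_\mu(\theta)\le 1-\theta$, hence $B_\mu(\theta)\le\Lambda_\mu(\theta)\le 1-\theta$. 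That yields the difference quotient $-B_\mu(\theta)/(1-\theta)\ge -1$, which is the \emph{wrong} direction. So the Hölder step alone is not enough; what is needed is a \emph{lower} bound on $\Lambda_\mu$ or $B_\mu$ near $1$, equivalently an upper bound on $-B_\mu(\theta)/(1-\theta)$ of the form $\le -1$, i.e. $B_\mu(\theta)\ge 1-\theta$ is false too — rather $B_\mu(\theta)\le \theta-1<0$. Thus the correct target is $B_\mu(\theta)\le\theta-1$ for $\theta<1$, and to get the factor $\theta-1$ rather than $1-\theta$ one must exploit the geometry of the construction (the selected-cylinder splitting with $p_0\le p_1$, the separation condition, and the families $\mathcal G_k$), not merely the crude bound $\mu([j])\le p_1^{\,n}$. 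Concretely, I would instead bound $\overline P_\mu^{\theta,t}$ from \emph{below} along the specific packings supported on $\bigcup_k\mathcal G_k$ and show it is $+\infty$ for $t<\theta-1$, giving $\Lambda_\mu(\theta)\ge\theta-1$; but since we want $B_\mu$ (the $P$-dimension, not the prepacking one) the honest route is to revisit the lower-bound machinery for $B_\mu$ from \cite{O.L1} on this measure.

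\textbf{Where the difficulty lies.} The real obstacle is that $B_{\mu-}'(1)\le -1$ is a \emph{lower} bound statement on the multifractal spectrum function — it says $B_\mu$ decreases at unit rate (at least) at $\theta=1$ — and such lower bounds require constructing or exhibiting enough mass/packings, i.e. using the detailed combinatorics of the $\mathcal G_k$'s and the two prescribed frequencies $\beta_1<\gamma_1<\beta_2<\gamma_2$, together with $p_0\le p_1$. The clean conceptual input is: at $\theta=1$, $B_\mu(1)=0$ with $\operatorname{supp}\mu$ containing points of local dimension $\log_2(1/p_0)>1$ coming from the $p_0$-biased splitting on selected cylinders, and the left derivative of $B_\mu$ at $1$ equals (minus) such a local dimension value, which is $\le -1$ because $p_0\le 1/2$ forces $\log_2(1/p_0)\ge 1$. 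So the plan's final step is: identify $-B_{\mu-}'(1)$ with the largest local dimension realized on a positive-$B_\mu$-dimensional subset (Legendre-duality / the lower bound half of the Olsen formalism applied to the set $\overline X^{a}$ from the example), observe that the biased branching gives local dimension $\log_2(1/p_0)\ge 1$ on such a set, and conclude $B_{\mu-}'(1)\le -\log_2(1/p_0)\le -1$. I expect writing the quantitative lower bound for $B_\mu$ near $1$ — rather than the easy upper bound $B_\mu\le\Lambda_\mu\le\theta\log_2 p_1$ — to be the main obstacle, and it is the step that genuinely uses the construction of $\mathcal L$.
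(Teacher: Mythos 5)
There is a genuine gap, and it starts with a sign error in your reformulation that derails everything after it. Since $B_{\mu }$ is convex and decreasing with $B_{\mu }(1)=0$, one has $B_{\mu }(\theta )\geq 0$ for $\theta <1$ and $B_{\mu -}^{\prime }(1)=\sup_{\theta <1}\frac{B_{\mu }(\theta )-B_{\mu }(1)}{\theta -1}$, so the claim $B_{\mu -}^{\prime }(1)\leq -1$ is equivalent to the \emph{lower} bound $B_{\mu }(\theta )\geq 1-\theta $ for all $\theta <1$. Your stated target $B_{\mu }(\theta )\leq \theta -1$ is not only in the wrong direction, it is impossible (it would force $B_{\mu }(\theta )<0$ to the left of $1$), and even after you notice the trouble mid-argument you reassert this same impossible target. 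The H\"older computation you actually carry out only yields an upper bound on $\Lambda _{\mu }$ (essentially $\Lambda _{\mu }(\theta )\leq 1-\theta $), which, as you yourself observe, says nothing about the proposition; so none of the ``key steps'' advances the proof.

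Your closing paragraph does correctly recast the claim as a lower-bound statement on $B_{\mu }$, but the mechanism you propose is not viable. The ``lower-bound half of the Olsen formalism'' (identifying $-B_{\mu -}^{\prime }(1)$ with a maximal local dimension attained on a set of positive $B_{\mu }$-dimension) is not a theorem you may invoke; such lower bounds require auxiliary Gibbs-type measures and are precisely what is not available for free. Moreover the factual input is wrong for this construction: the $p_{0}$-biased splitting acts only along selected cylinders, and by Proposition \ref{prop6sym} every $x\in \mathcal{C}$ has local dimensions confined to $[g(\beta _{1}),g(\gamma _{2})]$ with $g(\gamma _{2})<1$, while points outside $\mathcal{C}$ have local dimension exactly $1$; there is no positive-dimensional set with local dimension $\log _{2}(1/p_{0})>1$. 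The paper's proof goes differently and is the missing idea: using (\ref{eq2v}), take any partition $(E_{i})$ of $\mathbb{X}$ with all $\overline{P}_{\mu }^{\theta ,t}(E_{i})<+\infty $, cover each $E_{i}$, via the Besicovitch property (constant $\zeta $), by balls of a common small radius $\delta _{i}<\varepsilon $ arranged in $\zeta $ packings; on the balls away from the two cylinders $\left[ j^{1}\right] $ and $\left[ j^{2}\right] $ one has $\mu (B)\leq c\,m(B)$ with $m$ the uniform measure, hence $\mu (B)^{\theta }\delta _{i}^{t}\geq C^{\theta -1}\delta _{i}^{\theta -1+t}\mu (B)$ because $\theta -1<0$; since these balls cover all of $\mathbb{X}$ except finitely many generation-$n_{0}$ cylinders, summing gives $\zeta \sum_{i}\overline{P}_{\mu }^{\theta ,t}(E_{i})+\zeta \geq c^{\prime }\varepsilon ^{\theta -1+t}$, which tends to $+\infty $ as $\varepsilon \rightarrow 0$ whenever $t<1-\theta $. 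This forces $B_{\mu }(\theta )\geq 1-\theta $ for $\theta <1$, and the convexity argument above then yields $B_{\mu -}^{\prime }(1)\leq -1$.
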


\begin{proof}
As $B_{\mu }(1)=0$ and $B_{\mu }$ is convex, it comes that to prove that $%
B_{\mu -}^{^{\prime }}(1)\leq -1$, it is sufficient to establish that for
all $\theta <1,$ 
\begin{equation*}
B_{\mu }(\theta )\geq 1-\theta ,
\end{equation*}%
which amounts given (\ref{eq2v}), to show that if $\left( \underset{i}{\cup }%
E_{i}\right) $ is a partition of $\mathbb{X}$, then $\underset{i\in I}{\sum }%
\overline{P}_{\mu }^{\theta ,t}(E_{i})=+\infty $. \newline
Let us consider the case where for all $i\in I,$ $\overline{P}_{\mu
}^{\theta ,t}(E_{i})<+\infty $.\newline
Let $0<\varepsilon <\frac{1}{2^{n_{0}}}.$ For all $i\in I$, we choose $%
\delta _{i}<\varepsilon $ such that 
\begin{equation}
\overline{P}_{\mu ,\delta _{i}}^{\theta ,t}(E_{i})\leq \overline{P}_{\mu
}^{\theta ,t}(E_{i})+\frac{1}{2^{i}}.  \label{eq14}
\end{equation}%
As the space $\mathbb{X}$ satisfies the Besicovitch covering property, there
exists an integer $\zeta $ (which depends only on $\mathbb{X}$) such that
each $E_{i}$ is covered by $\overset{\zeta }{\underset{u=1}{\cup }}\left( 
\underset{j}{\cup }B\left( x_{ij},\delta _{i}\right) \right) _{u}$ such that
for all $1\leq u\leq \zeta $, $\left( B\left( x_{ij},\delta _{i}\right)
\right) _{j}$ is a packing. \newline
Given (\ref{eq14}), it comes that 
\begin{equation*}
\underset{u=1}{\overset{\zeta }{\sum }}\underset{j}{\sum }\mu \left( B\left(
x_{ij},\delta _{i}\right) \right) ^{\theta }\delta _{i}^{t}\leq \zeta \left( 
\overline{P}_{\mu }^{\theta ,t}(E_{i})+\frac{1}{2^{i}}\right) .
\end{equation*}%
Then, 
\begin{equation}
\underset{i}{\sum }\left( \underset{u=1}{\overset{\zeta }{\sum }}\underset{j}%
{\sum }\mu \left( B\left( x_{ij},\delta _{i}\right) \right) ^{\theta }\delta
_{i}^{t}\right) \leq \zeta \underset{i}{\sum }\overline{P}_{\mu }^{\theta
,t}(E_{i})+\zeta .  \label{eq15}
\end{equation}%
let us consider the sum 
\begin{equation}
\underset{i}{\sum }\left( \underset{u=1}{\overset{\zeta }{\sum }}\underset{l}%
{\sum }^{^{\prime }}\mu \left( B\left( x_{il},\delta _{i}\right) \right)
^{\theta }\delta _{i}^{t}\right) ,  \label{eq16}
\end{equation}%
where $\underset{l}{\sum }^{^{\prime }}$ is taken over all $l$ such that the
distance between $x_{il}$ and $\left[ j^{1}\right] $ (respectively $\left[
j^{2}\right] $) is larger than $\dfrac{1}{2^{n_{0}-1}}.$\newline
In this case, there exists $c$ which depends only on $n_{0}$ such that 
\begin{equation*}
\mu \left( B\left( x_{il},\delta _{i}\right) \right) \leq c\,m\left( B\left(
x_{il},\delta _{i}\right) \right) ,
\end{equation*}%
where $m$ is the Lebesgue measure.\newline
Then there exists $C$ such that 
\begin{equation}
C^{\theta -1}\delta _{i}^{\theta -1+t}\leq \mu \left( B\left( x_{il},\delta
_{i}\right) \right) ^{\theta -1}\delta _{i}^{t}.  \label{eq17}
\end{equation}%
Moreover, the union of the balls contained in the sum (\ref{eq16}) covers $%
\mathbb{X}$ deprived of at most 6 cylinders of the generation $n_{0}$,
therefore given (\ref{eq17}), we obtain that 
\begin{equation*}
\left( 1-\frac{6}{2^{n_{0}-1}}\right) C^{\theta -1}\varepsilon ^{\theta
-1+t}\leq \underset{i}{\sum }\left( \underset{u=1}{\overset{\zeta }{\sum }}%
\underset{l}{\sum }^{^{\prime }}\mu \left( B\left( x_{il},\delta _{i}\right)
\right) ^{\theta }\delta _{i}^{t}\right) .
\end{equation*}%
We deduce according to (\ref{eq15}), 
\begin{equation*}
\left( 1-\frac{6}{2^{n_{0}}}\right) C^{\theta -1}\varepsilon ^{\theta
-1+t}\leq \zeta \underset{i}{\sum }\overline{P}_{\mu }^{\theta
,t}(E_{i})+\zeta .
\end{equation*}%
Letting $\varepsilon \rightarrow 0,$ it results that $\underset{i\in I}{\sum 
}\overline{P}_{\mu }^{\theta ,t}(E_{i})=+\infty $ while $t<1-\theta ,$ 
\newline
so that 
\begin{equation*}
B_{\mu }(\theta )\geq 1-\theta ,\text{ }\theta <1.
\end{equation*}
\end{proof}

\begin{proposition}
\label{prop6sym} We set 
\begin{equation*}
\mathcal{C}=\underset{k\geq 1}{\cap }\left( \underset{\left[ j\right] \in 
\mathcal{G}_{k}}{\cup }\left[ j\right] \right)
\end{equation*}%
and the function $g$ defined on $\left[ 0,1\right] $ by 
\begin{equation*}
g(x)=-\frac{x\log \left( \frac{p_{0}}{p_{1}}\right) +\log p_{1}}{\log 2}.
\end{equation*}%
i. If $x\notin \mathcal{C},$ then 
\begin{equation*}
\underset{r\rightarrow 0}{\lim }\frac{\log (\mu \left( B(x,r)\right) }{\log r%
}=1.
\end{equation*}%
ii. If $x\in \mathcal{C},$ then 
\begin{equation*}
g(\beta _{1})\leq \underset{r\rightarrow 0}{\lim \inf }\frac{\log (\mu
\left( B(x,r)\right) }{\log r}\leq \underset{r\rightarrow 0}{\lim \sup }%
\frac{\log (\mu \left( B(x,r)\right) }{\log r}\leq g(\gamma _{2}).
\end{equation*}
\end{proposition}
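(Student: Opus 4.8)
The plan is to analyze the local behaviour of $\mu$ along dyadic cylinders, distinguishing whether a point eventually leaves the nested family $\mathcal G_k$ or stays in it forever. Recall that for $x=(x_i)_{i\ge 0}$ and $2^{-n-1}\le r<2^{-n}$ we have $B(x,r)=[x_0\ldots x_n]$, so $\log r \sim -n\log 2$ and everything reduces to estimating $\mu([x_0\ldots x_{n-1}])$ as $n\to\infty$. The recursive rule for $\mu$ says that passing from $[j]$ to $[jl]$ we multiply by $p_l$ if $[j]$ contains a selected cylinder and by $\tfrac12$ otherwise; since the selected cylinders are exactly those of $\mathcal L=\bigcup_k\mathcal G_k$, the factor is $p_l$ only while the prefix of $x$ is sitting inside some $[j']\in\mathcal G_k$ that still has selected descendants, i.e. while $x\in\bigcup_{[j]\in\mathcal G_k}[j]$.

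For part i, I would argue as follows. If $x\notin\mathcal C$, there is a smallest $k$ such that $x$ is in no cylinder of $\mathcal G_k$; equivalently, beyond some length $n_1$ the prefix of $x$ is never contained in a selected cylinder. For $n>n_1$ we then have $\mu([x_0\ldots x_{n-1}]) = \mu([x_0\ldots x_{n_1-1}])\cdot 2^{-(n-n_1)}$, a constant times $2^{-n}$. Hence $\dfrac{\log\mu(B(x,r))}{\log r}=\dfrac{-n\log 2 + O(1)}{-n\log 2 + O(1)}\to 1$, which is the claim. (One should note the harmless subtlety that $B(x,r)$ is the cylinder $[x_0\ldots x_n]$, one letter longer than the prefix of length $n$, but this only shifts $n$ by $1$ and does not affect the limit.)

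For part ii, suppose $x\in\mathcal C$, so for every $k$ the prefix of length $n_0+6k$ of $x$ lies in some $[j_k]\in\mathcal G_k$. By the construction, every $[j]\in\mathcal G_k$ with $j\in\mathcal A^n$ satisfies $\beta_1<\tfrac{N_0(j)}{n}<\gamma_2$; moreover within each block of six new letters (the passage from $\mathcal G_k$ to $\mathcal G_{k+1}$ goes through $\widetilde{[j]}$, cylinders of the same type, or through intermediate selected cylinders) the frequency of zeros stays pinned in $(\beta_1,\gamma_2)$. Since $[j_k]$ always contains a selected cylinder, the measure of $[x_0\ldots x_{m-1}]$ for $m=n_0+6k$ is $p_0^{N_0(j_k)}p_1^{m-N_0(j_k)}$ up to a bounded factor coming from the first $n_0$ letters and the at most six transitional letters inside the last incomplete block. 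Writing $\theta_m=N_0(j_k)/m\in(\beta_1,\gamma_2)$, we get
\begin{equation*}
\frac{\log\mu(B(x,r))}{\log r}=\frac{\theta_m\log p_0+(1-\theta_m)\log p_1 + O(1/m)}{-\log 2} = g(\theta_m)+o(1),
\end{equation*}
and since $g$ is affine and decreasing (because $p_0\le p_1$ forces $\log(p_0/p_1)\le 0$), $g$ maps $(\beta_1,\gamma_2)$ into $(g(\gamma_2),g(\beta_1))$. Taking $\liminf$ and $\limsup$ over $m$ and then over the intermediate lengths $n$ (where the frequency of zeros can only drift within the same controlled range) gives $g(\beta_1)\le\liminf\le\limsup\le g(\gamma_2)$.

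The main obstacle is the bookkeeping in part ii for the lengths $n$ that are \emph{not} of the form $n_0+6k$: one must check that as the six new letters of a block are being appended, the running ratio $\log\mu([x_0\ldots x_{n-1}])/\log\tfrac1{2^{n}}$ stays in $[g(\beta_1),g(\gamma_2)]$ up to an error that vanishes as $n\to\infty$. This follows because adding at most six letters changes $N_0$ by at most six and $n$ by at most six, so the perturbation of $\theta_n=N_0/n$ is $O(1/n)$, and $g$ is Lipschitz; but one has to handle the second and third cases of the construction, where $\mathcal G_{k+1}$ is reached not through $\widetilde{[j]}$ but through cylinders containing a selected cylinder of a much later generation $\mathcal G_{n_{3i+2}}$ or $\mathcal G_{n_{3i+3}}$ — there the intermediate prefixes still lie inside selected cylinders whose zero-frequency obeys $\beta_1<N_0/n<\gamma_2$, so the factor is still $p_l$ and the same estimate applies. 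Once this uniform control is in place, parts i and ii follow by the elementary computations sketched above.
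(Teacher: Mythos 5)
There is a real gap, and it starts with your reading of the recursive definition of $\mu$. The rule multiplies $\mu([j])$ by $p_l$ when the cylinder $[j]$ \emph{contains} a selected cylinder as a subset, i.e.\ when $j$ is a prefix of some word of $\mathcal{L}=\bigcup_k\mathcal{G}_k$; you translate this into ``the factor is $p_l$ while the prefix of $x$ is sitting inside some $[j']\in\mathcal{G}_k$'', which reverses the containment. This matters most in part i: your pivotal claim that for $x\notin\mathcal{C}$ ``beyond some length $n_1$ the prefix of $x$ is never contained in a selected cylinder'' is not the relevant condition and is moreover false in general --- take $x\in[j^{1}]\setminus\mathcal{C}$: every prefix of $x$ of length at least $n_0$ is contained in the selected cylinder $[j^{1}]\in\mathcal{G}_0$. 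What you actually need is that beyond some length the prefix cylinder of $x$ \emph{contains no} selected cylinder, so that the halving rule applies from then on. This is true but requires an argument you do not give: choose $k_0$ with $x\notin\bigcup_{[j]\in\mathcal{G}_{k_0}}[j]$; this is a finite union of cylinders, hence closed, so $x$ lies at distance $\rho>0$ from it; every selected cylinder of generation $\geq k_0$ sits inside that union, so it cannot be contained in a prefix cylinder of $x$ once $2^{-n}<\rho$, while the finitely many selected cylinders of generation $<k_0$ are too short to be contained in long prefixes. (This is the point where the paper invokes the separation condition.) As written, your part i does not cover points such as $x\in[j^{1}]\setminus\mathcal{C}$.

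Part ii reaches the right conclusion, and your bookkeeping at intermediate lengths is in fact more explicit than the paper's (which works essentially only at the lengths $n_0+6k$), but the justification again leans on the reversed containment: the correct reason the factor is $p_l$ at \emph{every} step along $x\in\mathcal{C}$ is that each prefix cylinder of $x$ contains a selected cylinder of a deep enough generation $\mathcal{G}_{k'}$ (one containing $x$), whence $\mu([x_0\ldots x_{m-1}])=p_0^{N_0}p_1^{m-N_0}$ exactly for all $m$; this also dissolves your worry about the second and third cases of the construction. Two further slips: the zero-frequency constraint $\beta_1<N_0(j)/n<\gamma_2$ is what pins $\theta_m$ up to $O(1/m)$, as you say, but your parenthetical that $g$ is decreasing is wrong --- $g'(x)=\log(p_1/p_0)/\log 2\geq 0$, so $g$ is increasing (as the paper states), and your own final inequality $g(\beta_1)\leq\liminf\leq\limsup\leq g(\gamma_2)$ only follows from $g$ increasing; with $g$ decreasing it would come out reversed. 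So: fix the containment direction throughout, supply the missing ``no selected cylinder inside the prefix'' argument in part i, and correct the monotonicity of $g$; the remaining computations are fine and follow the paper's route.
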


\begin{proof}
\textit{i}. Let $x\notin \mathcal{C}.$ Thanks to the separation condition,
for $r>0$ small enough, the ball $B(x,r)=\left[ j\right] $ such that $j\in 
\mathcal{A}^{n+1}$, $\dfrac{1}{2^{n+1}}\leq r<\dfrac{1}{2^{n}}$ and $\left[ j%
\right] $ do not meet $\mathcal{C}$. There exists $c$ such that 
\begin{equation*}
\mu \left( \left[ j\right] \right) =\frac{c}{2^{n}}.
\end{equation*}%
We deduce that 
\begin{equation*}
\underset{n\rightarrow +\infty }{\lim }\frac{\log (\mu \left( \left[ j\right]
\right) }{\log \left( \dfrac{1}{2^{n+1}}\right) }=1
\end{equation*}%
i.e 
\begin{equation*}
\underset{r\rightarrow 0}{\lim }\frac{\log (\mu \left( B(x,r)\right) }{\log r%
}=1.
\end{equation*}

\textit{ii}. It is clear that if $\left[ j\right] \in \mathcal{G}_{k}$ such
that $j\in \mathcal{A}^{n}$, then 
\begin{equation*}
\mu \left( \left[ j\right] \right) =p_{0}^{N_{0}(j)}p_{1}^{n-N_{0}(j)},
\end{equation*}%
so that 
\begin{equation*}
\mu (\left[ j\right] )=\left( \dfrac{1}{2^{n}}\right) ^{g\left( \frac{%
N_{0}(j)}{n}\right) }.
\end{equation*}%
Furthermore, we recall that, 
\begin{equation*}
\beta _{1}<\dfrac{N_{0}(j)}{n}<\gamma _{2}.
\end{equation*}%
The function $g$ is strictly increasing, it comes that 
\begin{equation}
g\left( \beta _{1}\right) <\dfrac{\log (\mu \left[ j\right] )}{\log \left( 
\dfrac{1}{2^{n}}\right) }<g\left( \gamma _{2}\right) .  \label{eq20}
\end{equation}%
Let $x\in \mathcal{C}$ and $r<\dfrac{1}{2^{n_{0}+6}}$, then $B(x,r)$ is
contained in one of the selected cylinders $\left[ j^{1}\right] $ or $\left[
j^{2}\right] .$\newline
We consider the selected cylinder $\left[ j\right] $ such that $j\in 
\mathcal{A}^{n+1}$, $\dfrac{1}{2^{n+1}}\leq r<\dfrac{1}{2^{n}}$ and $\left[ j%
\right] =B(x,r)$, we can write 
\begin{equation*}
\mu \left( B(x,r)\right) =\mu \left( \left[ j\right] \right) .
\end{equation*}%
Given (\ref{eq20}), it results that 
\begin{equation*}
g(\beta _{1})\leq \underset{r\rightarrow 0}{\lim \inf }\frac{\log (\mu
\left( B(x,r)\right) }{\log r}\leq \underset{r\rightarrow 0}{\lim \sup }%
\frac{\log (\mu \left( B(x,r)\right) }{\log r}\leq g(\gamma _{2}).
\end{equation*}
\end{proof}

Subsequently, even if we choose $p_{0}>\gamma _{2}$, we stand in the case
where $g(\gamma _{2})<1.$

Thus under the proposition \ref{prop6sym}, 
\begin{equation*}
\overline{X}^{g(\gamma _{2})}=\mathcal{C}.
\end{equation*}

Let $a>0$ such that $g(\gamma _{1})<a\leq g(\gamma _{2})$ and\ $\overline{X}%
^{a}\neq \varnothing $.

\begin{proposition}
\label{prop7sym} 
\begin{equation*}
\underset{q\in E}{\inf }\left\{ \Phi _{q}T_{\chi }^{q}(\alpha )\right\} <%
\underset{q\in E}{\inf }(\left\langle q,\alpha \right\rangle +B_{\chi }(q)).
\end{equation*}
\end{proposition}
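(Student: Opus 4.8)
The plan is to pass through the single quantity $\beta^{\ast}:=\inf_{\theta\ge0}(a\theta+B_{\mu}(\theta))$ and to exhibit one $q\in E$ with $q_{1}+q_{2}>0$ at which the new bound already beats Peyri\`ere's bound. First, by Propositions~\ref{thv1sym1} and~\ref{thv1sym} the right-hand side equals $\beta^{\ast}$. Since $B_{\mu}$ is convex with $B_{\mu}(1)=0$ and $B'_{\mu-}(1)\le-1$ (Proposition~\ref{prop5symbis}), and $a\le g(\gamma_{2})<1$, the function $\theta\mapsto a\theta+B_{\mu}(\theta)$ is still strictly decreasing at $1^{-}$; together with $\lim_{\theta\to\infty}B_{\mu}(\theta)=-\infty$ (Proposition~\ref{prop4sym}) and $a>g(\gamma_{1})\ge-\log p_{1}/\log2$, which forces $a\theta+B_{\mu}(\theta)\to+\infty$, this shows that $\beta^{\ast}$ is finite, strictly positive, and attained at some $\theta^{\ast}\ge1$.

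Next I would compute $\Phi_{q}$. Fix $q\in E$ and set $\theta:=q_{1}+q_{2}>0$; as $B_{\chi}$ depends on $q$ only through $q_{1}+q_{2}$ we have $B_{\chi}((\gamma-t)q)=B_{\mu}((\gamma-t)\theta)$, so for $t<0$ the monotonicity of $B_{\mu}$ gives $\Phi_{q}(t)=\max\bigl(0,\,B_{\mu}^{-1}(ta\theta)/\theta+t\bigr)$. Minimising $t\mapsto B_{\mu}^{-1}(ta\theta)/\theta+t$ over $t<0$, the critical value occurs where $B_{\mu}^{-1}(ta\theta)=\theta^{\ast}$, and substitution gives $\Phi_{q}=\theta^{\ast}/\theta+B_{\mu}(\theta^{\ast})/(a\theta)=\beta^{\ast}/(a\theta)$ (the boundary case $\theta^{\ast}=1$ gives $\Phi_{q}=1/\theta$ and $\beta^{\ast}=a$, so the formula persists). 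Hence
\[
\Phi_{q}\,T_{\chi}^{q}(\alpha)=\frac{\beta^{\ast}}{a\theta}\,T_{\chi}^{q}(\alpha),
\]
and, $\beta^{\ast}$ being positive, the Proposition is equivalent to the estimate $T_{\chi}^{q}(\alpha)<a\theta$ for the same $q$.

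The core of the argument is this estimate, and here is where I expect the real work. From $e^{\langle q,\chi(x,r)\rangle}\ge r^{\lambda}\mu(B(x,r))^{\theta}$ one gets $\frac{\langle q,\chi(x,r)\rangle}{\log r}\le\lambda+\theta\frac{\log\mu(B(x,r))}{\log r}$; since a point off $\mathcal{C}$ has local dimension $1>a$, Proposition~\ref{prop6sym} shows $X_{\langle q,\alpha\rangle}(\eta,p)\subset\mathcal{C}$, on which $\frac{\log\mu(B(x,r))}{\log r}$ is, up to $o(1)$, $g(N_{0}(\cdot)/n)$ at the scale of a generation-$n$ cylinder. I would then bound $L^{q}(A)$, for $A\subset X_{\langle q,\alpha\rangle}(\eta,p)$, by a recovering argument: given a centered $\varepsilon$-packing $(B(x_{i},r_{i}))_{i\in I}$ of $A$, with $r_{i}$ of generation $n_{i}$, replace each ball by a cylinder $B(y_{i},\delta_{i})$ of type $T_{1}$ of generation comparable to $n_{i}$; this is possible because at every generation in a ``Case~1'' block a fixed positive proportion of the cylinders of $\mathcal{C}$ are of type $T_{1}$, and the family $(B(y_{i},\delta_{i}))_{i\in I}$ can be arranged to be a $u_{\varepsilon}$--$k$--Besicovitch packing with $k$ bounded, because in the ``Case~2'' and ``Case~3'' blocks each cylinder has exactly one son, so the cardinality of $\mathcal{C}$ barely increases from one block to the next. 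Since a type-$T_{1}$ cylinder of generation $m$ has $\mu$-exponent $<g(\gamma_{1})$, one obtains $\frac{\langle q,\chi(y_{i},\delta_{i})\rangle}{\log r_{i}}\le\theta g(\gamma_{1})+o_{\varepsilon}(1)$; invoking (\ref{eq5v}) and then letting $\varepsilon\to0$, $k\to\infty$, $p\to\infty$, $\eta\to0^{+}$, $\lambda\to0$ yields $T_{\chi}^{q}(\alpha)\le\theta g(\gamma_{1})<\theta a$, whence $\Phi_{q}T_{\chi}^{q}(\alpha)\le\frac{g(\gamma_{1})}{a}\beta^{\ast}<\beta^{\ast}$, and the infimum over $q\in E$ finishes the proof.

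The hard part is keeping the Besicovitch multiplicity of the recovering family bounded. A ball centred in a type-$T_{2}$ cylinder has, within the same (very long) ``Case~1'' block, all its sub- and super-cylinders of type $T_{2}$, so one must recentre it onto a type-$T_{1}$ cylinder lying elsewhere; these recovering balls, no longer disjoint, could pile up along nested cylinders over arbitrarily many generations. It is exactly to prevent this that the super-exponential spacing $n_{3i+1}=2^{n_{3i}}n_{0}$ and the one-son structure of the two short blocks are in the construction: they ensure that at each scale there is a supply of ``fresh'' type-$T_{1}$ cylinders large enough to redistribute the packing with multiplicity bounded independently of the packing chosen. Turning this bookkeeping into a clean proof is the only substantial step.
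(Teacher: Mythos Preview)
Your overall strategy coincides with the paper's: reduce the right-hand side to $\beta^{\ast}=\inf_{\theta\ge0}(a\theta+B_{\mu}(\theta))$, compute $\Phi_{q}$ in terms of $B_{\mu}$, and then bound $T_{\chi}^{q}(\alpha)$ by $\theta\,g(\gamma_{1})$ via a recovering argument that replaces balls centred in type-$T_{2}$ cylinders by type-$T_{1}$ cylinders. Your computation of $\Phi_{q}$ is in fact a bit sharper than the paper's (you obtain the closed form $\Phi_{q}=\beta^{\ast}/(a\theta)$ for all $\theta>0$, while the paper restricts to the slice $F=\{q:q_{1}+q_{2}=1\}$ and only needs the resulting inequality), but this extra generality is not used in the end.

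The genuine gap is exactly the step you flag as ``the only substantial step'': controlling the Besicovitch multiplicity of the recovering family. Your heuristic (positive proportion of type-$T_{1}$ cylinders, super-exponential spacing, one-son blocks) is not the mechanism the paper uses, and as stated it is not a proof. The paper dispatches this in one stroke by exploiting a piece of the construction you do not mention: the \emph{related} relation on $\mathcal{L}$, defined recursively by declaring the two elements of $\mathcal{G}_{0}$ related and propagating to children. This relation is a bijection at each generation, and the construction is arranged so that any selected cylinder that is not of type $T_{1}$ is related to one that is, at the \emph{same} generation. Given a centred $\varepsilon$-packing $(B(x_{i},r_{i}))_{i\in I}$, set $I_{1}=\{i:[j]_{i}\text{ is of type }T_{1}\}$ and $I_{2}=I\setminus I_{1}$; leave the balls with $i\in I_{1}$ unchanged and, for $i\in I_{2}$, replace $B(x_{i},r_{i})$ by the related type-$T_{1}$ cylinder $[j']_{i}$. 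The separation condition guarantees that $(B(x'_{i},2^{-(n+1)}))_{i\in I_{2}}$ is again a centred $\varepsilon$-packing, so the full family is a $u_{\varepsilon}$--$2$--Besicovitch packing: one obtains $k=2$ immediately, with no bookkeeping about proportions or block lengths. This gives $L^{q,2}(A)\le\lambda+g(\gamma_{1})$ and hence $T_{\chi}^{F}(\alpha)\le g(\gamma_{1})<a$, which is all that is needed.

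A small side remark: your claim that $a>g(\gamma_{1})\ge-\log p_{1}/\log 2$ forces $a\theta+B_{\mu}(\theta)\to+\infty$ uses the upper bound $B_{\mu}(\theta)\le\theta\log p_{1}/\log 2$ in the wrong direction; fortunately the argument does not actually need $\beta^{\ast}$ to be attained, only that $\inf_{\theta\ge1}=\inf_{\theta\ge0}$, which follows from $B_{\mu-}'(1)\le-1<-a$ as you note.
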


\begin{proof}
Put $s=\underset{\theta }{\inf }B_{\mu }(\theta )<0.$ It is clear that 
\begin{equation*}
\underset{q\in E}{\inf }\left\{ \Phi _{q}T_{\chi }^{q}(\alpha )\right\} \leq 
\underset{q\in E}{\inf }\left\{ T_{\chi }^{q}(\alpha )\underset{s<t<0}{\inf }%
\left( \Phi _{q}(t)\right) \right\} .
\end{equation*}%
We put 
\begin{equation*}
F=\left\{ q=(q_{1},q_{2})\in E;\text{ }q_{1}+q_{2}=1\right\} .
\end{equation*}%
It follows that for all $q\in F$, $T_{\chi }^{q}(\alpha )$ keeps a constant
value which we denote by $T_{\chi }^{F}(\alpha )$, we can write%
\begin{equation*}
\underset{q\in E}{\inf }\left\{ \Phi _{q}T_{\chi }^{q}(\alpha )\right\} \leq
T_{\chi }^{F}(\alpha )\underset{s<t<0}{\inf }\left( \Phi _{q}(t)\right) .
\end{equation*}%
Also the equality 
\begin{equation*}
\Phi _{q}(t)=\inf \left\{ \gamma >0;\text{ }ta(q_{1}+q_{2})>B_{\mu }((\gamma
-t)(q_{1}+q_{2}))\right\}
\end{equation*}%
gives 
\begin{equation*}
\underset{q\in E}{\inf }\left\{ \Phi _{q}T_{\chi }^{q}(\alpha )\right\} \leq
T_{\chi }^{F}(\alpha )\underset{s<t<0}{\inf }\left\{ \inf \left\{ \gamma >0;%
\text{ }ta>B_{\mu }((\gamma -t))\right\} \right\} .
\end{equation*}%
According to proposition \ref{prop4sym}, we verify that 
\begin{equation*}
\underset{s<t<0}{\inf }\left\{ \inf \left\{ \gamma >0;\text{ }ta>B_{\mu
}((\gamma -t))\right\} \right\} =\dfrac{1}{a}\underset{\theta \geq 1}{\inf }%
\left( \alpha \theta +B_{\mu }(\theta )\right) .
\end{equation*}%
It comes that%
\begin{equation*}
\underset{q\in E}{\inf }\left\{ \Phi _{q}T_{\chi }^{q}(\alpha )\right\} \leq 
\dfrac{T_{\chi }^{F}(\alpha )}{a}\underset{\theta \geq 1}{\inf }\left(
\alpha \theta +B_{\mu }(\theta )\right) .
\end{equation*}%
On the other hand, as $a<1$ and by proposition \ref{prop5symbis}, it follows
that $B_{\mu -}^{^{\prime }}(1)\leq -a$, therefore 
\begin{equation*}
\underset{\theta \geq 1}{\inf }\left( a\theta +B_{\mu }(\theta )\right) =%
\underset{\theta \geq 0}{\inf }\left( a\theta +B_{\mu }(\theta )\right) .
\end{equation*}%
Then we deduce according to proposition \ref{thv1sym}\ 
\begin{equation*}
\underset{q\in E}{\inf }\left\{ \Phi _{q}T_{\chi }^{q}(\alpha )\right\} \leq 
\dfrac{T_{\chi }^{F}(\alpha )}{a}\underset{q\in E}{\inf }(\left\langle
q,\alpha \right\rangle +B_{\chi }(q)).
\end{equation*}%
Remains to show that%
\begin{equation*}
\dfrac{T_{\chi }^{F}(\alpha )}{a}<1.
\end{equation*}%
Let $A\subset X_{\left\langle q,\alpha \right\rangle }(\eta ,p)$ and $%
(B(x_{i},r_{i}))$ a centered $\varepsilon -$packing of $A.$ It is clear that
for all $i\in I$, $x_{i}\in \mathcal{C}.$ Then we consider the selected
cylinder $\left[ j\right] _{i}=B(x_{i},r_{i})$ such that $j\in \mathcal{A}%
^{n+1}$ and $\dfrac{1}{2^{n+1}}\leq r_{i}<\dfrac{1}{2^{n}}.$ \newline
We consider the partition $I_{1}\cup I_{2}$ of $I$ such that\newline
\begin{equation*}
I_{1}=\left\{ i\in I:\left[ j\right] _{i}\text{ is of type }T_{1}\right\} 
\text{ and }I_{2}=I\backslash I_{1}.
\end{equation*}%
We recall that each cylinder $\left[ j\right] _{i}$, $i\in I_{2}$ is related
to a single cylinder type $T_{1}$ centered $x_{i}^{\prime }\in A$, denoted $%
\left[ j^{\prime }\right] _{i}$. With the condition of separation, $\left(
B\left( x_{i}^{\prime },\dfrac{1}{2^{n+1}}\right) \right) _{i\in I_{2}}$ is
a centered $\varepsilon -$packing of $A$. Then we consider the family $%
\left( B(y_{i},\delta _{i})\right) _{i\in I}$ defined by 
\begin{equation*}
B(y_{i},\delta _{i})=\left\{ 
\begin{array}{l}
B(x_{i},r_{i})\text{, }i\in I_{1} \\ 
B\left( x_{i}^{\prime },\dfrac{1}{2^{n+1}}\right) ,\text{ }i\in I_{2}.%
\end{array}%
\right.
\end{equation*}%
We verify that 
\begin{equation*}
\frac{\log \mu (B(y_{i},\delta _{i}))}{\log \delta _{i}}\leq \frac{\log \mu
\left( \left[ j\right] _{i}\right) }{\log \left( \dfrac{1}{2^{n+1}}\right) },%
\text{ }i\in I_{1}
\end{equation*}%
and 
\begin{equation*}
\frac{\log \mu (B(y_{i},\delta _{i}))}{\log \delta _{i}}\leq \frac{\log \mu
\left( \left[ j^{\prime }\right] _{i}\right) }{\log \left( \dfrac{1}{2^{n+1}}%
\right) },\text{ }i\in I_{2}.
\end{equation*}%
Given (\ref{eq19}) and the fact that the function $g$ is increasing, we
deduce that for all $i\in I,$ 
\begin{equation*}
\frac{\log \mu (B(y_{i},\delta _{i}))}{\log \delta _{i}}\leq g(\gamma _{1}).
\end{equation*}%
Yet for $q\in F$ and\ $\lambda >0,$ there exists $r_{0}>0$ such that $%
\varepsilon \leq r_{0}$ and $\delta _{i}<r_{0}$ 
\begin{equation*}
\frac{\left\langle q,\chi (y_{i},\delta _{i})\right\rangle }{\log \delta _{i}%
}\leq \lambda +\frac{\log \mu (B(y_{i},\delta _{i}))}{\log \delta _{i}},
\end{equation*}%
then 
\begin{equation*}
L_{\varepsilon ,(B(x_{i},r_{i}))_{i\in I}}^{q,2}(A)\leq \lambda +g(\gamma
_{1}).
\end{equation*}%
It follows that $L_{\varepsilon }^{q,2}(A)\leq \lambda +g(\gamma _{1}),$
letting $\varepsilon \rightarrow 0$ and $\lambda \rightarrow 0$ , we deduce
that 
\begin{equation*}
L^{q,2}(A)\leq g(\gamma _{1}).
\end{equation*}%
The sequence $\left( L^{q,k}(A)\right) _{k}$ is decreasing, it results that 
\begin{equation*}
L^{q}(A)\leq g(\gamma _{1}),
\end{equation*}%
then, 
\begin{equation*}
T_{\chi }^{q}(\alpha )\leq g(\gamma _{1}),
\end{equation*}%
as $g(\gamma _{1})<a$ and $T_{\chi }^{q}(\alpha )=T_{\chi }^{F}(\alpha )$ ,
it follows that 
\begin{equation*}
T_{\chi }^{F}(\alpha )<a.
\end{equation*}%
Finally we obtain 
\begin{equation*}
\underset{q\in E}{\inf }\left\{ \Phi _{q}T_{\chi }^{q}(\alpha )\right\} <%
\underset{q\in E}{\inf }(\left\langle q,\alpha \right\rangle +B_{\chi }(q)).
\end{equation*}
\end{proof}

\begin{corollary}
\ 
\begin{equation*}
\Dim(X_{\chi }\left( \alpha ,E\right) )\leq \underset{q\in E}{\inf }\left\{
\Phi _{q}T_{\chi }^{q}(\alpha )\right\} <\underset{q\in E}{\inf }%
(\left\langle q,\alpha \right\rangle +B_{\chi }(q)).
\end{equation*}
\end{corollary}

\begin{proof}
Follows from theorem \ref{th3v} and proposition \ref{prop7sym}.
\end{proof}

\bigskip

\end{document}